\def\@author#1{\g@addto@macro\elsauthors{\normalsize%
    \def\baselinestretch{1}%
    \upshape\authorsep#1\unskip\textsuperscript{%
      \ifx\@fnmark\@empty\else\unskip\sep\@fnmark\let\sep=,\fi
      \ifx\@corref\@empty\else\unskip\sep\@corref\let\sep=,\fi
      }%
    \def\authorsep{\unskip,\space}%
    \global\let\@fnmark\@empty
    \global\let\@corref\@empty  %% Added
    \global\let\sep\@empty}%
    \@eadauthor={#1}
}
\newtheorem{theorem}{Theorem}[section]
\newtheorem{lemma}[theorem]{Lemma}
\newtheorem{corollary}[theorem]{Corollary}
\theoremstyle{definition}
\newtheorem{definition}[theorem]{Definition}
\theoremstyle{remark}
\newcommand{\C}{\mathbb{C}}
\newcommand{\N}{\mathbb{N}}
\newcommand{\R}{\mathbb{R}}
\newcommand{\Z}{\mathbb{Z}}
\newcommand{\zak}{\mathrm{Z}}
\newcommand{\gabor}{\mathcal G(g,\alpha,\beta)}
\newcommand{\esssup}{\mathop{{\rm ess}\,{\rm sup}}\limits}
\newcommand{\essinf}{\mathop{{\rm ess}\,{\rm inf}}\limits}
\newcommand{\abs}[1]{\left|#1\right|}
\newcommand{\norm}[1]{\left\|#1\right\|}
\newcommand{\id}{\mathrm{id}}
\renewcommand{\Re}{\mathrm{Re}}
\renewcommand{\Im}{\mathrm{Im}}
\newcommand{\ds}{\displaystyle}
\newcommand{\AST}[2]{\underset{#1}{\overset{#2}{\mbox{\raisebox{-0.105em}{\LARGE$*$\rule[-0.11em]{0pt}{0.76em}}}}}}
\begin{document}

\begin{frontmatter}
\title%[Zeros of the Zak transform of TP functions]
{Zeros of the Zak transform of totally positive functions\tnoteref{t1}}

\tnotetext[t1]{AMS subject classification 2000: 42C15,41A15,42C40,30E10}

\author{Tobias Kloos}
\ead{tobias.kloos@math.tu-dortmund.de}
\address{Faculty of Mathematics, TU Dortmund, D-44227 Dortmund}

\begin{abstract}
We study the Zak transform of totally positive (TP) functions.
We use the convergence of the Zak transform of TP functions of finite type
to prove that the Zak transforms of all TP functions without Gaussian factor 
in the Fourier transform have only one 
zero in their
fundamental domain of quasi-periodicity.
Our proof is based on complex analysis, especially the
Theorem of Hurwitz and some real analytic arguments, where
we use the connection of TP functions of finite type and
exponential B-splines.
\end{abstract}

\begin{keyword}
Gabor frame \sep Total positivity \sep Exponential B-spline \sep Zak transform
\end{keyword}

\end{frontmatter}
%\tableofcontents

%%%%%%%%%%%%%%%%%%%%%%%%%%%%%%%%%%%%%%%%%%%%%%%%%%%%%%%%%%%%%%%%%%%%%%%%%%%%%%%%

\section*{Introduction}

The Gabor transform provides an important tool for the ana\-lysis of 
a given signal $f:\R\to\C$ in time and frequency. 
A window function $g\in L^2(\R)$ has
 time-frequency shifts
$$
    M_\xi T_y g(x)= e^{2\pi i \xi x} g(x-y),\qquad \xi,y\in\R.
$$
The Gabor transform of a square-integrable signal $f$ is defined as 
$$
   \mathcal{S}_g f(k\alpha,l\beta)
	= \langle f,M_{l\beta} T_{k\alpha} g\rangle,\qquad k,l\in\Z,
$$
where the parameters 
$(k\alpha,l\beta)\in\alpha\Z\times \beta\Z$ of the time-frequency shifts of
$g$ form a lattice in $\R^2$, with 
lattice parameters $\alpha,\beta>0$. The family
$$\mathcal G(g,\alpha,\beta) := \{M_{l\beta} T_{k\alpha}\,g \mid k,l\in\Z \}$$
is called a Gabor family. 
If there
exist constants $A,B>0$, which depend on $g,\alpha,\beta$, such that for 
every $f\in L^2(\R)$ we have
\begin{equation}\label{eq:framecond}
   A\|f\|^2 \le \sum_{k,l\in\Z} |\langle f, M_{l\beta} T_{k\alpha} g\rangle|^2
	\le B\|f\|^2,
\end{equation}
the family is called a Gabor frame. In order to describe Gabor families of a window function $g$, 
a very helpful tool is the Zak transform
\begin{equation*}%\label{eq:quasiper}
\mathrm Z_{\alpha}g(x,\omega) := \sum_{k\in\Z} g(x-k\alpha) e^{2\pi ik\alpha\omega},
\qquad (x,\omega)\in\R^2.
\end{equation*}
In Approximation Theory, the Zak transform  was 
used by Schoenberg \cite{Schoenberg:1973} 
in connection with cardinal spline interpolation.
For a polynomial 
B-spline $B_m$ of degree $m-1$, Schoenberg called $Z_1B$ 
the {\em exponential Euler spline}. 
The Zak transform $Z_\alpha g$ has the properties
\begin{equation}\label{eq:Zak1}
\mathrm Z_{\alpha}g(x,\omega+\tfrac{1}{\alpha}) = \mathrm Z_{\alpha}g(x,\omega),
\qquad
\mathrm Z_{\alpha}g(x+\alpha,\omega) = 
e^{2\pi i\alpha \omega}\, \mathrm Z_{\alpha}g(x,\omega).
\end{equation}
Therefore, its values in the lattice cell $[0,\alpha)\times [0,\tfrac{1}{\alpha})$
define $Z_\alpha g$ completely.  A well-known result for Gabor families $\gabor$, with
$\alpha=1$, $\beta=1/N$, and $N\in\N$, states that
the values  
\begin{equation}\label{eq:Aopt}
   A_{\rm opt}=\essinf_{x,\omega\in[0,1)} \sum_{j=0}^{N-1} 
	|Z_1 g(x,\omega+\tfrac{j}{N})|^2,\qquad
	   B_{\rm opt}=\esssup_{x,\omega\in[0,1)} \sum_{j=0}^{N-1} 
	|Z_1 g(x,\omega+\tfrac{j}{N})|^2,
\end{equation}
are the optimal frame-bounds $A,B$ in the inequality \eqref{eq:framecond}, whenever they are
positive and finite, see \cite[page 981]{Dau:1990}, \cite{Janssen:2003.2}. For rational values of 
$\alpha\beta$, a connection of the Zak transform $Z_\alpha g$ with 
the frame-bounds of the Gabor family $\gabor$ 
was given by Zibulsky and Zeevi \cite{ZibZee:1997}.
Therefore, the presence and the
location of zeros
of $Z_\alpha  g$ is relevant for the existence of lower frame-bounds in 
\eqref{eq:framecond}. Moreover, the celebrated Balian-Low theorem
\cite{Dau:1990} 
states that a Gabor family at the critical density $\alpha\beta=1$ 
cannot be a frame, if the window function $g$ or its Fourier transform 
$$
    \hat g(\omega)= \int_\R g(x) e^{-2\pi i x \omega}\,dx
$$
is continuous and in the \textit{Wiener space}, 
\begin{equation*}%\label{eq:wiener}
W(\R):= \{ g\in L^{\infty}(\R) \mid \norm{g}_W=\sum_{n\in\Z} \esssup_{x\in[0,1]} \abs{g(x+n)} < \infty \}.
\end{equation*}
 The proof in \cite{Janssen:1982} uses the
topological argument, that every continuous 
function with the property \eqref{eq:Zak1} must have a zero in every lattice cell and, hence, $A_{\rm opt}$ in \eqref{eq:Aopt} with $N=1$ is zero.
In \cite{BanGroeStoe:2013} the connection of 
the Zak transform and discretized Gabor frames is summarized and it is pointed out, that
the knowledge of the location of the zeros is sufficient for providing that a
periodized and sampled window function generates a discrete Gabor frame.

Motivated by the results in \cite{KloStoe:2014}, we show that every Zak transform of  
a totally positive function without a Gaussian factor in
its Fourier transform has exactly one zero in its fundamental domain, which appears at $\omega = \tfrac{1}{2}$.
For this, in the first two sections we give a short introduction on totally positive functions, exponential B-splines and how they are
related to each other. In section three we prove the stated conjecture by using several convergence properties of totally positive
functions of finite type.

%%%%%%%%%%%%%%%%%%%%%%%%%%%%%%%%%%%%%%%%%%%%%%%%%%%%%%%%%%%%%%%%%%%%%%%%%%%%%%%%

\section{Totally positive functions}

In this section, we want to introduce totally positive functions in Schoenberg's terminology and give some remarkable properties. For more detailed information on total positivity in terms of functions and matrices see \cite{Karl:1968}.

\begin{definition}[Totally positive (TP) function, \cite{Schoenberg:1947}]
A measurable, non-constant function $g:\R\rightarrow\R$ is called totally positive (TP), if for every $N\in\N$ and two sets of real numbers
$$x_1<x_2<\ldots<x_N\ ,\ \ \ \ \ y_1<y_2<\ldots<y_N,$$
the corresponding matrix $A=\bigl(g(x_j-y_k)\bigr)_{j,k=1}^N$ has a non-negative determinant.
\end{definition}

The most popular examples of TP functions are the exponential functions $e^{ax}$, $a\in\R\setminus\{0\}$, which are not integrable, the one- and two-sided exponentials $e^{-bx}\chi_{[0,\infty)}(x)$, $e^{-b\abs{x}}$, $b>0$, and the Gaussian $e^{-x^2}$, which are in $L^1(\R)$. Schoenberg gave a characterization of TP functions by their two-sided Laplace transforms and specified the subclass of integrable TP functions as follows.

\begin{theorem}[\cite{Schoenberg:1947}, \cite{Schoenberg:1951}]
A function $g:\R\rightarrow\R$, which is not an exponential $g(x) = Ce^{ax}$ with $C,a\in\R$, is a TP function, if and only if 
its two-sided Laplace transform exists in a strip $S=\{s\in\C\mid\alpha < \mathrm{Re}\,s < \beta\}$ with $-\infty\leq\alpha<\beta\leq\infty$ 
and is given by
$$(\mathcal Lg)(s) = \int_{-\infty}^{\infty} g(t) e^{-st}\,dt = Cs^{-n}e^{\gamma s^2-\delta s} 
\prod_{\nu=1}^{\infty} \frac{e^{a_{\nu}^{-1}s}}{1+a_{\nu}^{-1}s},$$
where $n\in\N_0$ and $C,\gamma,\delta,a_{\nu}$ are real parameters with
$$C>0,\ \ \gamma\geq0,\ \ a_{\nu}\neq 0,\ \ 0<\gamma+\sum_{\nu=1}^{\infty}\left(\tfrac{1}{a_{\nu}}\right)^2<\infty.$$
Moreover, $g$ is integrable and TP, if and only if its Fourier transform is given by
$$\hat{g}(\omega) =  \int_{-\infty}^{\infty} g(t) e^{-2\pi it\omega}\,dt 
=C e^{-\gamma\omega^2} e^{-2\pi i\delta\omega} 
\prod_{\nu=1}^{\infty}\frac{e^{2\pi ia_{\nu}^{-1}\omega}}{1+2\pi ia_{\nu}^{-1}\omega},$$
with the same conditions on $C,\gamma,\delta,a_{\nu}$ as above.
\end{theorem}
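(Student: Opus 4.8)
The plan is to treat the Laplace and the Fourier statements together, the second being obtained from the first by the substitution $s=2\pi i\omega$ once one observes that a TP function is non-negative (its $1\times1$ minors are the values $g(x-y)$), so that $g\in L^1(\R)$ is equivalent to $0$ lying in the strip $S$ of convergence of $\mathcal Lg$, with $(\mathcal Lg)(0)=\einsnorm{g}<\infty$. It is convenient throughout to work with the reciprocal $\Psi:=1/\mathcal Lg$: the asserted formula says precisely that $\Psi$ continues to an entire function of Laguerre--Pólya type, $\Psi(s)=\tfrac1C s^{\,n}e^{-\gamma s^2+\delta s}\prod_\nu(1+a_\nu^{-1}s)e^{-a_\nu^{-1}s}$ with $C>0$, $\gamma\ge0$, the $a_\nu$ real and $\sum_\nu a_\nu^{-2}<\infty$, and conversely that every such $\Psi$ arises this way. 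Each equivalence then splits into a sufficiency and a necessity half, and the reduction to the Fourier form is completed by noting that integrability forces $0\in\mathrm{int}\,S$, hence $\Psi(0)\neq0$, hence $n=0$, so that the $(n=0)$-product for $\mathcal Lg$ turns verbatim into the displayed product for $\hat g$.

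For sufficiency I would build the class from three bricks: the Gaussian $G_\gamma$ with $\widehat{G_\gamma}=Ce^{-\gamma\omega^2}$; the truncated exponentials $a_\nu e^{-a_\nu x}\chi_{[0,\infty)}(x)$ for $a_\nu>0$ together with their reflections for $a_\nu<0$, whose Fourier transforms are $(1+2\pi i a_\nu^{-1}\omega)^{-1}$; and point translations, which account for the factors $e^{-2\pi i\delta\omega}$ and $e^{2\pi i a_\nu^{-1}\omega}$. Each brick is checked to be TP by a direct evaluation of the defining determinants (a staircase computation for the one-sided exponential, the classical positivity of the kernel $e^{2cxy}$ for the Gaussian), and the decisive closure property is that convolution preserves total positivity. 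This is the basic composition formula (Cauchy--Binet / Andréief),
\[
\det\bigl((g_1*g_2)(x_j-y_k)\bigr)=\int_{z_1<\cdots<z_N}\det\bigl(g_1(x_j-z_k)\bigr)\,\det\bigl(g_2(z_j-y_k)\bigr)\,dz_1\cdots dz_N ,
\]
an integral of products of non-negative determinants. Since the function with the prescribed $\hat g$ is, up to a fixed translation, the convolution of $G_\gamma$ with the (finite or infinite) convolution of suitably centred truncated exponentials, it remains to take the limit in the infinite product: $\sum_\nu a_\nu^{-2}<\infty$ gives $L^1$-convergence of the partial convolutions, and a limiting lemma — a limit of TP functions that is uniform on compacta, which is available since one may keep a fixed smoothing factor in the convolutions — finishes this half.

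For necessity the input is a TP, non-exponential $g$. One first records the elementary facts that $g\ge0$, that $\mathcal Lg$ converges in a maximal open strip $S$, and that $(\mathcal Lg)(\sigma)>0$ for real $\sigma\in S$. The core — and the only genuinely hard point — is to prove that $\mathcal Lg$ is zero-free on $S$ and that $\Psi=1/\mathcal Lg$ continues to an entire function of genus at most two with only real zeros. I would obtain this from the variation-diminishing character of total positivity: by the composition formula, $g$ being TP is equivalent to the convolution operator $f\mapsto g*f$ not increasing the number of sign changes of $f$, and Schoenberg's theory of \emph{variation-diminishing convolution transforms} identifies exactly the admissible Laplace transforms as those whose reciprocals lie in the Laguerre--Pólya class. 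An equivalent and more transparent route is discretization: sampling $g$ on a grid of mesh $h$ produces a two-sided \emph{Pólya frequency sequence}, whose generating function is characterized by the theorems of Aissen--Schoenberg--Whitney and Edrei as a quotient of functions of Laguerre--Pólya shape; letting $h\to0$ and controlling the limit recovers the continuous factorization. Granting real-rootedness of $\Psi$, Hadamard's factorization theorem yields the stated product, $C>0$ follows from $g\ge0$, $\gamma\ge0$ because a negative $\gamma$ would force Gaussian growth of $\mathcal Lg$ incompatible with convergence in a strip, $n\in\N_0$ is the order of the zero of $\Psi$ at the origin, and $0<\gamma+\sum_\nu a_\nu^{-2}$ merely excludes the degenerate cases in which $g$ collapses to a translate of a point mass or of the excluded exponential $Ce^{ax}$.

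The main obstacle is exactly this structural fact in the necessity direction: that total positivity simultaneously forbids complex zeros and faster-than-Gaussian growth of the reciprocal Laplace transform. Everything else — the three determinant computations, the composition formula, Hadamard's theorem, and the bookkeeping on $C,\gamma,\delta,n$ and the $a_\nu$ — is routine once real-rootedness of $\Psi$ is in hand, but that step is precisely the deep content of Schoenberg's theorem and its discrete companion. I would therefore present the brick-and-convolution part in full and invoke \cite{Schoenberg:1947,Schoenberg:1951}, together with the theory of Pólya frequency sequences behind it, for the real-rootedness of $\Psi$.
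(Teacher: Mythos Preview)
The paper does not prove this theorem at all: it is stated with attribution to \cite{Schoenberg:1947,Schoenberg:1951} and used as background, with no \texttt{proof} environment following it. So there is no ``paper's own proof'' against which to compare your proposal.

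That said, your sketch is a faithful outline of the classical argument. The sufficiency half via the three building blocks (Gaussian, one-sided exponentials, translations), the composition formula for total positivity under convolution, and the limiting step using $\sum a_\nu^{-2}<\infty$ are exactly the standard ingredients. Your own assessment of the necessity half is accurate: the real content is that $1/\mathcal Lg$ extends to an entire function of Laguerre--P\'olya type, and you correctly identify this as the deep step that must ultimately be cited from Schoenberg (or derived via the discrete P\'olya frequency sequence route of Aissen--Schoenberg--Whitney and Edrei). One small point: in the Fourier statement the paper does not impose $n=0$ explicitly, but your observation that integrability forces $0$ into the interior of the strip, hence $\Psi(0)\ne 0$, hence $n=0$, is the right way to reconcile the two displays. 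Overall there is no gap in your plan beyond the one you already flag and attribute; since the paper itself simply cites the result, your write-up would in fact supply strictly more than the paper does.
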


Unless otherwise specified, we will consider integrable TP functions with $\gamma =0$ and distinguish the infinite and finite type. Thus, by disregarding scaling and shifting, we focus on functions $g,g_n\in L^2(\R)$, given by their Fourier transforms
\begin{align}\label{TPfun}
\hat{g}(\omega) = \prod_{\nu=1}^{\infty} \frac{e^{2\pi i\frac{\omega}{a_{\nu}}}}{1+2\pi i\frac{\omega}{a_{\nu}}},\quad\hat{g}_n(\omega) = \prod_{\nu=1}^{n} \frac{e^{2\pi i\frac{\omega}{a_{\nu}}}}{1+2\pi i\frac{\omega}{a_{\nu}}},\ n\in\N,
\end{align}
where $(a_{\nu})_{\nu\in\N}\subset\R\setminus\{0\}$ and $\sum_{\nu=1}^\infty a_{\nu}^{-2}<\infty$. By applying the inverse Fourier transform to (\ref{TPfun}), we get the following expressions in the time domain
\begin{align}\label{TPfun2}
g(x) = \AST{\nu=1}{\infty} \abs{a_{\nu}}\,e^{-a_{\nu}(x+\frac{1}{a_{\nu}})}\ \chi_{[0,\infty)}\bigl(\mathrm{sign}(a_{\nu})(x+\tfrac{1}{a_{\nu}})\bigr),\\\notag
g_n(x) = \AST{\nu=1}{n} \abs{a_{\nu}}\,e^{-a_{\nu}(x+\frac{1}{a_{\nu}})}\ \chi_{[0,\infty)}\bigl(\mathrm{sign}(a_{\nu})(x+\tfrac{1}{a_{\nu}})\bigr).
\end{align}
Following Schoenberg, we define the reciprocals of their Laplace transforms as $\Psi$ or $\Psi_n$,
\begin{align}\label{TypII}
\Psi(s) = \prod_{\nu=1}^\infty (1+a_{\nu}^{-1}s)e^{-a_{\nu}^{-1}s}\quad,
\Psi_n(s) = \prod_{\nu=1}^n (1+a_{\nu}^{-1}s)e^{-a_{\nu}^{-1}s}\quad,\sum_{\nu=1}^\infty a_{\nu}^{-2} < \infty.
\end{align}
Obviously, these are entire functions with zeros only on the real line. 
By using their Laplace transforms, Hirschman and Widder in \cite[Theorem 4a]{HirschWid:1949} and Schoenberg in \cite{Schoenberg:1951} show independently from each other, that
$$\lim_{n\rightarrow\infty}g_n(x) = g(x),$$
uniformly on $\R$. Since we want to extend the result of the supposed convergence of TP functions to convergence of their Zak transforms in Section~\ref{Sec:Zak}, we need to adapt the proof by Hirschman and Widder. We start with a detailed inspection of the Laplace transforms. Since $\abs{1-(1-z)e^z} \leq \abs{z}^2$, for all $z\in\C$ with $\abs{z}\leq 1$, it is easy to see, that
$$\sum_{\nu=1}^{\infty}\abs{1-(1+a_{\nu}^{-1}s)e^{-a_{\nu}^{-1}s}}, \quad s\in\C,$$
converges locally uniformly in $\C$ and so does
$$\prod_{\nu=1}^{\infty} (1+a_{\nu}^{-1}s)e^{-a_{\nu}^{-1}s}.$$
Moreover the following was proved.

\begin{lemma}[{{\cite[Theorem 2b]{HirschWid:1949}}}]\label{HirschWid}
For any number $\tau_0\in\R\setminus\{0\}$ and any integer $p$, there exists a constant $M_p>0$ with
$$\abs{\frac{1}{\Psi(\omega+i\tau)}} \leq M_p\abs{\tau}^{-p},\quad \rm{for\ all\ } \abs{\tau}\geq \abs{\tau_0},$$
and for all $n\geq p$
$$\abs{\frac{1}{\Psi_n(\omega+i\tau)}} \leq M_p\abs{\tau}^{-p},\quad \rm{for\ all\ } \abs{\tau}\geq \abs{\tau_0},$$
uniformly for $\omega$ in any compact interval of existence.
\end{lemma}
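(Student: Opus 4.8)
The plan is to establish the decay estimate $\abs{1/\Psi(\omega+i\tau)}\le M_p\abs{\tau}^{-p}$ by controlling the modulus of the entire function $\Psi$ from below on horizontal lines $\Im s = \tau$ far from the real axis, and then to obtain the uniform-in-$n$ version by the same argument applied to the partial products $\Psi_n$, noting that the tail factors only help. First I would observe that each factor $(1+a_\nu^{-1}s)e^{-a_\nu^{-1}s}$ has the form $(1+w)e^{-w}$ with $w = a_\nu^{-1}s$; writing $s = \omega + i\tau$ with $\omega$ in a fixed compact interval $[\alpha+\epsilon,\beta-\epsilon]$ inside the strip of existence, one has $\abs{1+w}^2 = (1+\omega/a_\nu)^2 + (\tau/a_\nu)^2 \ge (\tau/a_\nu)^2$ and $\Re(-w) = -\omega/a_\nu$, so
\[
\abs{(1+a_\nu^{-1}s)e^{-a_\nu^{-1}s}} \ge \frac{\abs{\tau}}{\abs{a_\nu}}\, e^{-\omega/a_\nu}.
\]
Since $\prod_\nu e^{-\omega/a_\nu}$ need not converge, the bookkeeping must be done more carefully: I would split the product, estimating all but finitely many factors (those with $\abs{a_\nu}$ large) by the locally uniform convergence of $\prod_\nu(1+a_\nu^{-1}s)e^{-a_\nu^{-1}s}$ together with the fact that this product is zero-free on the line $\Im s=\tau\ne 0$ (its zeros lie on $\R$), and estimating each of the finitely many remaining "small $a_\nu$" factors from below individually by a constant times $\abs{\tau}$.

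More precisely, here is the core mechanism. Fix $p$. Choose $N_0$ so large that $\sum_{\nu>N_0}\abs{1-(1+a_\nu^{-1}s)e^{-a_\nu^{-1}s}} \le 1/2$ uniformly for $s$ in the relevant closed region $\{\omega\in[\alpha+\epsilon,\beta-\epsilon],\ \abs{\tau}\le \abs{\tau_0}\}$ — wait, that is the wrong region; rather, I want the tail estimate on the whole strip $\omega\in[\alpha+\epsilon,\beta-\epsilon]$, which does hold because the convergence $\sum_\nu\abs{1-(1+a_\nu^{-1}s)e^{-a_\nu^{-1}s}}$, being locally uniform, can be made uniform on this vertical strip after discarding finitely many terms and using $\abs{1-(1+z)e^z}\le\abs{z}^2$ once $\abs{a_\nu^{-1}s}\le 1$; but $\abs{a_\nu^{-1}s}$ is not bounded on the strip. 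This is the first real obstacle: the strip is unbounded in $\tau$. To get around it, I would not try to bound the tail uniformly; instead I would use a genuinely global lower bound. The function $\Psi$ has order at most $2$ and genus at most $1$, and on the imaginary direction $\abs{\Psi(\omega+i\tau)}$ grows: indeed, regrouping $(1+a_\nu^{-1}s)e^{-a_\nu^{-1}s}$, one shows $\log\abs{\Psi(\omega+i\tau)} \ge \sum_\nu\bigl(\tfrac12\log(1 + \tau^2/a_\nu^2) - \omega/a_\nu\bigr) + O(1)$, and after pairing terms the $\omega/a_\nu$ contributions telescope against the convergence hypothesis $\sum a_\nu^{-2}<\infty$ (plus the fact that $\sum a_\nu^{-1}$ need only converge conditionally or be regularized via the $e^{-a_\nu^{-1}s}$ factors). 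For $\abs{\tau}\ge\abs{\tau_0}$ each summand $\tfrac12\log(1+\tau^2/a_\nu^2)$ is positive, and keeping only the $p$ terms with smallest $\abs{a_\nu}$ gives $\log\abs{\Psi(\omega+i\tau)} \ge p\log\abs{\tau} - C_p$, i.e. $\abs{1/\Psi(\omega+i\tau)}\le M_p\abs{\tau}^{-p}$.

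The uniform-in-$n$ statement for $n\ge p$ follows by the identical computation applied to $\Psi_n$: the partial product still contains the $p$ factors with smallest $\abs{a_\nu}$ (after relabelling the $a_\nu$ in increasing order of magnitude, or more carefully, one argues that the first $p$ factors present in $\Psi_n$ already contribute $p\log\abs{\tau}$ and the remaining $n-p$ factors $(1+a_\nu^{-1}s)e^{-a_\nu^{-1}s}$ are bounded below by a constant independent of $n$, since their logarithms sum to something bounded below uniformly in $n$ by the same telescoping/convergence argument). I expect the main obstacle to be precisely this last uniformity: showing that $\inf_n\inf_{\abs{\tau}\ge\abs{\tau_0}}\abs{\Psi_n(\omega+i\tau)/\prod_{\nu\le p}(1+a_\nu^{-1}s)e^{-a_\nu^{-1}s}}>0$ with the infimum over $n$ as well. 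This is handled by the uniform (in $n$) convergence of $\sum_{\nu=1}^n\bigl(\log\abs{(1+a_\nu^{-1}s)e^{-a_\nu^{-1}s}}\bigr)$ from below, which in turn rests on the elementary inequality $\log\abs{(1+w)e^{-w}} = \tfrac12\log\abs{1+w}^2 - \Re w \ge -C\abs{w}^2$ valid for, say, $\abs{w}\le 1$, combined with monotonicity of the positive contributions $\tfrac12\log(1+\tau^2/a_\nu^2)$ in $n$. I would relegate the precise constant-chasing to a lemma but make sure the structure — finitely many explicit factors giving the $\abs{\tau}^{-p}$ decay, infinitely many (or $n-p$ many) tail factors giving a uniform positive lower bound — is transparent.
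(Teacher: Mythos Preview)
The paper does not prove this lemma; it is quoted verbatim from Hirschman--Widder \cite[Theorem~2b]{HirschWid:1949} and used as a black box in the proof of Theorem~\ref{convrate}. So there is no ``paper's own proof'' to compare against.

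That said, your sketch is essentially correct and is in fact close to how Hirschman and Widder argue. The structure --- $p$ factors each contributing $\abs{\tau}/\abs{a_\nu}$ to give the $\abs{\tau}^p$ growth, and the remaining (finitely or infinitely many) factors bounded below by a positive constant uniformly in $n$ and $\tau$ --- is the right one. The ``obstacle'' you flag (that $\abs{a_\nu^{-1}s}$ is unbounded on the vertical strip) you also resolve correctly a few lines later: since $\abs{1+a_\nu^{-1}s}^2 \ge (1+\omega/a_\nu)^2$, one has for each tail factor
\[
\log\bigl|(1+a_\nu^{-1}s)e^{-a_\nu^{-1}s}\bigr| \;\ge\; \log\abs{1+\omega/a_\nu} - \omega/a_\nu \;\ge\; -C\,(\omega/a_\nu)^2,
\]
the last inequality holding because $\abs{\omega/a_\nu}\le K/a_0<1$ when $\omega$ ranges over a compact subinterval of $(-a_0,a_0)$. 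Summing over $\nu>p$ gives a lower bound $-CK^2\sum_\nu a_\nu^{-2}$, which is finite and independent of both $\tau$ and $n$. This is the clean version of the argument; your write-up reaches it but only after some back-and-forth that you should excise. Also note that \emph{any} $p$ of the factors contribute $\abs{\tau}^p$ (with a constant depending on which ones), so no relabelling by magnitude is needed for the $\Psi_n$ case --- just use the first $p$ factors of $\Psi_n$.
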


With this, we can give some more information about the rate of convergence of sequences of TP functions.

\begin{theorem}\label{convrate}
Let $g$ be a TP function of infinite type, $g_n$ the TP function of type $n$ as in (\ref{TPfun}) and let $a_0:=\ds\min_{\nu\in\N}\abs{a_{\nu}}$. Then for $0\leq\sigma<a_0$
$$\lim_{n\rightarrow\infty}\abs{g(x)-g_n(x)}e^{\sigma\abs{x}}= 0 ,$$
uniformly for $x\in\R$.
\end{theorem}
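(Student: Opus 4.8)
The plan is to adapt the Fourier/Laplace inversion argument of Hirschman--Widder, but to read off the uniform \emph{weighted} decay by pushing the contour of integration a little way into the strip of analyticity. Recall that $\mathcal L g(s)=1/\Psi(s)$ and $\mathcal L g_n(s)=1/\Psi_n(s)$, and that the only zeros of $\Psi$ and $\Psi_n$ are the real points $-a_\nu$ with $|a_\nu|\ge a_0>\sigma$; hence $1/\Psi$ and $1/\Psi_n$ are analytic in the open strip $\Sigma=\{s\in\C:|\Re s|<a_0\}$, and both $g$ and $g_n$ (the latter being of finite type, with exponential decay of rate at least $a_0$ on each unbounded side, cf. \eqref{TPfun2}) are recovered by the inversion formula $g(x)=\frac1{2\pi}\int_{\R}e^{(c+i\tau)x}/\Psi(c+i\tau)\,d\tau$, and likewise for $g_n$, on every vertical line $\Re s=c$ with $|c|<a_0$. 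For $c=0$ this is ordinary Fourier inversion, legitimate because Lemma~\ref{HirschWid} (applied with $\omega$ ranging over $[-\sigma,\sigma]$) shows that $\hat g=1/\Psi(2\pi i\,\cdot)$ and $\hat g_n=1/\Psi_n(2\pi i\,\cdot)$ decay faster than any polynomial; for general $c\in(-a_0,a_0)$ one moves the contour from $\Re s=0$ to $\Re s=c$ by Cauchy's theorem, the horizontal segments at height $\pm iR$ again vanishing as $R\to\infty$ by Lemma~\ref{HirschWid}.

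Now fix $x\in\R$ and set $c=-\sigma$ if $x\ge0$ and $c=\sigma$ if $x<0$, so that $|c|=\sigma<a_0$ and, crucially, $\bigl|e^{\sigma|x|}e^{(c+i\tau)x}\bigr|=e^{\sigma|x|+cx}=1$ for every $\tau\in\R$. Subtracting the two inversion formulas on the line $\Re s=c$ and multiplying by $e^{\sigma|x|}$ yields
$$\bigl|e^{\sigma|x|}\bigl(g(x)-g_n(x)\bigr)\bigr|\ \le\ \frac1{2\pi}\int_{\R}\Bigl|\frac1{\Psi(c+i\tau)}-\frac1{\Psi_n(c+i\tau)}\Bigr|\,d\tau\ \le\ \frac1{2\pi}\bigl(I_{\sigma}(n)+I_{-\sigma}(n)\bigr),$$
where $I_{\pm\sigma}(n):=\int_{\R}\bigl|1/\Psi(\pm\sigma+i\tau)-1/\Psi_n(\pm\sigma+i\tau)\bigr|\,d\tau$. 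The right-hand side no longer depends on $x$, so it suffices to prove $I_{\pm\sigma}(n)\to0$ as $n\to\infty$.

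For this, split $\int_{\R}=\int_{|\tau|\le T}+\int_{|\tau|>T}$. On the tail, Lemma~\ref{HirschWid} with $p=2$ and $\tau_0=1$ gives $|1/\Psi(\pm\sigma+i\tau)|\le M_2|\tau|^{-2}$ and, for all $n\ge2$, $|1/\Psi_n(\pm\sigma+i\tau)|\le M_2|\tau|^{-2}$, whence $\int_{|\tau|>T}|\,\cdot\,|\,d\tau\le 4M_2/T$ for every $n\ge2$; choosing $T$ large makes this $<\varepsilon/2$ uniformly in $n$. On the bounded part, the segment $K_T=\{\pm\sigma+i\tau:|\tau|\le T\}$ is compact and contains no zero of $\Psi$ (the zeros are real and of modulus $\ge a_0>\sigma$), so $\delta:=\min_{K_T}|\Psi|>0$; since the canonical product \eqref{TypII} converges locally uniformly, $\Psi_n\to\Psi$ uniformly on $K_T$, so $\min_{K_T}|\Psi_n|\ge\delta/2$ for $n$ large, and then $|1/\Psi-1/\Psi_n|=|\Psi_n-\Psi|/(|\Psi|\,|\Psi_n|)\le(2/\delta^2)\max_{K_T}|\Psi_n-\Psi|$, giving $\int_{|\tau|\le T}|\,\cdot\,|\,d\tau\le (4T/\delta^2)\max_{K_T}|\Psi_n-\Psi|<\varepsilon/2$ for $n$ large. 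Combining the two ranges, $I_{\pm\sigma}(n)<\varepsilon$ for all large $n$, which completes the argument.

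The main obstacle is not any individual estimate but organising the argument so that the final bound is uniform in $x$: this is exactly what the contour shift by $c=-\sigma\,\mathrm{sign}(x)$ achieves, trading the growing weight $e^{\sigma|x|}$ for the unimodular factor $e^{\sigma|x|+cx}=1$. The second point to watch is the $L^1$-convergence $1/\Psi_n\to1/\Psi$ along the lines $\Re s=\pm\sigma$, which needs two inputs pulling in opposite directions --- uniform-on-compacta convergence of the products for a bounded range of $\tau$, and $n$-independent polynomial decay for the tails --- and it is precisely the uniformity in $n$ of the constant $M_p$ in Lemma~\ref{HirschWid} (valid for all $n\ge p$) that makes the tail estimate go through.
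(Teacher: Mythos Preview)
Your proof is correct and follows essentially the same route as the paper's: shift the inversion contour to $\Re s=\mp\sigma$ according to the sign of $x$ so that the weight $e^{\sigma|x|}$ is absorbed, then split the resulting integral into a bounded part (handled by locally uniform convergence $\Psi_n\to\Psi$) and a tail (handled by the $n$-uniform polynomial decay from Lemma~\ref{HirschWid}). Your write-up is somewhat more explicit than the paper's about why the contour shift is legitimate and why $1/\Psi_n\to1/\Psi$ uniformly on the compact segment, but the argument is the same.
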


\begin{proof}
Since the Laplace transforms of the given functions exist and are holomorphic in the strip with $-a_0< \Re\, s < a_0$, the inverse transforms provide for $0\leq\sigma<a_0$ and positive $x$
\begin{align*}
\abs{g(x)-g_n(x)} &= \abs{\frac{1}{2\pi i}\, \int_{-\sigma-i\infty}^{-\sigma+i\infty} \left(\frac{1}{\Psi(s)}-\frac{1}{\Psi_{n}(s)}\right)e^{xs}ds }\\
 &\leq \frac{e^{-\sigma x}}{2\pi} \int_\R\abs{\frac{1}{\Psi(i\tau-\sigma)}-\frac{1}{\Psi_{n}(i\tau-\sigma)}}d\tau.
\end{align*}
To show the convergence of the last integral, we split it into two integrals over the interval $I=[-R,R]$ and $\R\setminus I$, for $R>0$. For any given $\varepsilon>0$ there exists an $n_0\in\N$, such that for every $n\geq n_0$ the first part yields
\begin{align*}
 \int_{-R}^R \underbrace{\abs{\frac{1}{\Psi(i\tau-\sigma)}-\frac{1}{\Psi_{n}(i\tau-\sigma)}}}_{\longrightarrow 0\ (n\rightarrow\infty) \text{ uniformly in } [-R,R]}d\tau \leq 2\varepsilon R.
\end{align*}
With Lemma \ref{HirschWid}, for any integer $p$ and every $n>p$ the remaining integral can be estimated by
\begin{align*}
 \int_{\R\setminus I} \abs{\frac{1}{\Psi(i\tau-\sigma)}-\frac{1}{\Psi_{n}(i\tau-\sigma)}} d\tau &\leq 4M_p \int_R^{\infty} \tau^{-p}d\tau\\
 &=\frac{4M_p}{p-1}\,R^{-p+1},
\end{align*}
where $M_p$ is a constant independent of $n$. Now, choosing $p\geq 2$ and $R$ arbitrarily large completes the proof. The case of negative $x$ is given analogously by integrating over the parallel line to the imaginary axis, which includes $\sigma$.
\end{proof}
Note that the stated proof shows the same convergence properties for all the derivatives $g_n^{(r)}\rightarrow g^{(r)}$, by including a factor $s^r$ in the integrand and choosing $n\in\N$ sufficiently large.

Next, we take a closer look at the finite type, which is very interesting for implementing and computational usage of TP functions and also may reveal a connection to exponential B-splines. Since there are only finitely many exponential terms in (\ref{TPfun}), which realizes a shift by $\sum_{\nu=1}^n a_{\nu}$ in the time domain, we disregard them for the moment. Expression (\ref{TPfun2}) then simplifies to
$$g_n(x) = \AST{\nu=1}{n} \abs{a_{\nu}}\,e^{-a_{\nu}x}\ \chi_{[0,\infty)}\bigl(\mathrm{sign}(a_{\nu})\,x\bigr).$$
If $b_1,\ldots,b_r$, $b_i\neq b_j$, are distinct and $\mu_1,\ldots,\mu_r$ are the associated multiplicities, such that
$$\{a_1,\ldots,a_n\} = \{\underbrace{b_1,\ldots,b_1}_{\mu_1},\ldots,\underbrace{b_r,\ldots,b_r}_{\mu_r}\},$$
this notation implies, that
\begin{align}
g_n(x) = \begin{cases}\label{darstellungexp}
\ds\sum_{b_i>0} p_{b_i}(x)e^{-b_i x} & ,x\geq 0,\\
\ds\sum_{b_i<0} p_{b_i}(x)e^{-b_i x} & ,x\leq 0,
\end{cases}
\end{align}
where $p_{b_i}$ are polynomials of degree $\mu_i-1$. To be more precise, by using divided differences, St\"ockler \cite{Stoe:2012} gives the following closed form for these functions
\begin{align*}
g_n(x) &= (-1)^{n-1}\,\mathrm{sign}(x)\,\left(\prod_{\nu=1}^n a_{\nu}\right)\, [ a_1,\ldots,a_n\mid e^{-x\cdot}\chi_{[0,\infty)}(x\cdot)]\quad &,x\neq0,\\
g_n(x) &= (-1)^{n-1}\,\left(\prod_{\nu=1}^n a_{\nu}\right)\, [ a_1,\ldots,a_n\mid \chi_{[0,\infty)}(\cdot)]\quad &,x=0.
\end{align*}
With this form and the well-known identity
$$ [ a_1,\ldots,a_n\mid f] = \sum_{i=1}^r\sum_{j=1}^{\mu_r} c_{i,j}\, f^{(j-1)}(a_i)\quad ,\ c_{i,j}\in\R,\ c_{i,\mu_i} \neq 0,$$
of divided differences, see e.g. \cite{Schum:1981}, it is possible to compute the polynomials in (\ref{darstellungexp}). In the case of distinct weights, the formula reduces to
$$ [ a_1,\ldots,a_n\mid f] = \sum_{i=1}^n \prod_{j=1\atop j\neq i}^n (a_i-a_j)^{-1}\, f(a_i),$$
which provides an explicit form for $g_n$, under these assumptions. In \cite{BanGroeStoe:2013} Bannert, Gr\"ochenig and St\"ockler also computed this explicit form of the coefficients, by using the partial fraction decomposition of $\hat{g}_n$, instead  of divided differences.
All in all, these representations provide a good way for implementation and computing these functions for usage in time-frequency analysis. Moreover it reveals the close resemblance to exponential B-splines, which will be defined in the next section. 

%%%%%%%%%%%%%%%%%%%%%%%%%%%%%%%%%%%%%%%%%%%%%%%%%%%%%%%%%%%%%%%%%%%%%%%%%%%%%%%%

\section{Exponential B-splines}

We want to give a short summary of exponential B-splines and list some important features of them. For a more detailed introduction, see
e.g. \cite{Schum:1981}, \cite{Ron:1987} and \cite{KloStoe:2014}.

For a set of weights
$$
   \Lambda=(\underbrace{\eta_1,\ldots,\eta_1}_{\mu_1},\ldots,
	\underbrace{\eta_r,\ldots,\eta_r}_{\mu_r}),
$$
with  pairwise distinct $\eta_1,\ldots,\eta_r\in\R$, and each $\eta_j$ 
repeated with multiplicity $\mu_j\in\N$, the space $\mathcal U_m$, given by
$$\mathcal U_m = 
\mathrm{span}\left( e^{\eta_1 x},xe^{\eta_1 x},\ldots,x^{\mu_1-1}e^{\eta_1 x},\ldots,e^{\eta_r x},\ldots,x^{\mu_r-1}e^{\eta_r x} \right),$$
forms an extended complete Tschebycheff (ECT) space. This means, that there exists a basis $\{u_1,\ldots,u_m\}\in\mathcal U_m$, such that
$$
\det\left(M\begin{pmatrix}
    u_1,\ldots,u_\ell\\
    t_1,\ldots,t_\ell
   \end{pmatrix}\right)>0
$$
for all 
$1\le \ell\le m$ and $t_1\leq\ldots\leq t_\ell \in \R$. The matrix 
$$M\begin{pmatrix}
    u_1,\ldots,u_\ell\\
    t_1,\ldots,t_\ell
   \end{pmatrix}$$ 
is the collocation matrix of Hermite interpolation, if some nodes 
	$t_j$ coincide. In our case this basis is given by the exponentials.

For ECT spaces in general it is an important result in spline interpolation theory, that there exist functions, which are compactly supported, piecewise in these spaces and sufficiently smooth. These properties are fulfilled by B-splines. In our special case, the associated splines are defined as follows.

\begin{definition}
Let $\Lambda=(\lambda_1,\ldots,\lambda_m)\in\R^m$, $\lambda_0=0$, and let exponential weight functions
$$w_j := e^{(\lambda_j-\lambda_{j-1})x},\ \ j=1,\ldots,m,$$
be given. Then, with proper normalization, the exponential B-spline (EB-spline) $B_\Lambda$, with knots $0,1,\ldots,m$, is given by the convolution of the functions $e^{\lambda_j(\cdot)}\chi_{[0,1)}$,
\begin{align}\label{expBspline}
B_\Lambda =  e^{\lambda_1(\cdot)}\chi_{[0,1)} \, \ast e^{\lambda_2(\cdot)}\chi_{[0,1)} \,
 \ast \ldots \ast \, e^{\lambda_m(\cdot)}\chi_{[0,1)},
\end{align}
so its Fourier transform is given by
\begin{align*}%\label{eq:Blambdahat}
\hat{B_\Lambda}(\omega) = \prod_{j=1}^m \frac{e^{\lambda_{j}-2\pi i\omega}-1}{\lambda_{j}-2\pi i\omega}.
\end{align*}
\end{definition}

By the definition, it is easy to see, that $B_{\Lambda} \in C^{m-2}(\R)$ and ${\rm supp}\, B_{\Lambda} = [0,m]$. 
Moreover, $B_{\Lambda}\mid_{(j,j+1)} \in \mathcal U_m$, $0\le j\le m-1$, so they are piecewise exponential sums
\begin{equation}\label{eq:Bpiecewise}
  B_\Lambda(x+k-1)=\sum_{j=1}^r p_j^{(k)}(x) e^{\eta_j x},\qquad x\in[0,1),~1\leq k\leq m,
\end{equation}
with real polynomials $p_j^{(k)}$ of degree $\mu_j-1$ (cf. (\ref{darstellungexp})). For our purpose, we define the differential operators
\begin{equation*}%\label{eq:diffop}
{L}_j f = \frac{d}{dx}\left( \frac{f}{w_j} \right),\ 
\ \mathfrak L_j = {L}_j\cdots {L}_1,\quad j=1,\ldots,m,
\end{equation*}
as in \cite[page 365]{Schum:1981} and take note, that $\mathcal U_m$ is the kernel of $\mathfrak L_m$. Furthermore, these operators can also be written as
$$
   \mathfrak L_j =e^{-\lambda_{j}x}\prod_{k=1}^j\left(\frac{d}{dx}-\lambda_{k}\,\id\right).
$$
Up to normalization, $\mathfrak L_1B_{\Lambda}$ denotes the first reduced EB-spline and is given by
\begin{align*}%\label{expBsplinereduce}
B_{\{\lambda_2-\lambda_1,\ldots,\lambda_m-\lambda_1\}} =  e^{(\lambda_2-\lambda_1)(\cdot)}\chi_{[0,1)} \,
 \ast \ldots \ast \, e^{(\lambda_m-\lambda_1)(\cdot)}\chi_{[0,1)}.
\end{align*}

%%%%%%%%%%%%%%%%%%%%%%%%%%%%%%%%%%%%%%%%%%%%%%%%%%%%%%%%%%%%%%%%%%%%%%%%%%%%%%%%

\section{The Zak transform of totally positive functions} \label{Sec:Zak}

The Zak transform is an important tool in Gabor analysis which is commonly applied in spline theory. For a parameter $\alpha >0$ and a function $f:\R\rightarrow\C$ it is defined by
$$\mathrm{Z}_{\alpha}f(x,\omega) := \sum_{k\in\Z}f(x+\alpha k) e^{-2\pi ik\alpha \omega},$$
whenever this series exists. In the following we need the properties below of this transform. More facts can be found in \cite{Groech:2001}.

\begin{lemma}\label{Zakprop}
Let $f$ be an element of the Wiener space $ W(\R)$.
\begin{itemize}
 \item[a) ] $\mathrm Z_\alpha f(x,\omega)$ is bounded in $\R^2$, 
and if $f$ is continuous, then $\mathrm Z_\alpha f$ is continuous.
  \item[b) ] For every $n\in\Z$, we have the identities for periodicity
$$\mathrm Z_{\alpha}f(x,\omega+\tfrac{n}{\alpha}) = \mathrm Z_{\alpha}f(x,\omega)$$
and quasi-periodicity
$$\mathrm Z_{\alpha}f(x+n\alpha,\omega) = 
e^{2\pi in\alpha \omega}\, \mathrm Z_{\alpha}f(x,\omega).$$
 \item[c) ] If $\hat{f}\in W(\R)$ as well, then
$$\alpha\cdot \mathrm Z_{\alpha}f(x,\omega) = e^{2\pi ix\omega}\,\mathrm Z_{1/\alpha}\hat{f}(\omega,-x).$$
 \item[d) ] Let $f_{\alpha}=f(\alpha\cdot)$ be the scaled function of $f$. Then
$$\mathrm Z_{\alpha}f(x,\omega) = \mathrm Z_1f_{\alpha}(\tfrac{x}{\alpha},\alpha\omega).$$
\end{itemize}
\end{lemma}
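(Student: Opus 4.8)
The plan is to dispose of the two purely formal parts first and keep the only genuine analysis for part c). \emph{Part d)} is a change of variables inside the defining sum: with $f_\alpha(t)=f(\alpha t)$ one has $f_\alpha(\tfrac{x}{\alpha}+k)=f(x+\alpha k)$, so the series for $\mathrm Z_1 f_\alpha(\tfrac{x}{\alpha},\alpha\omega)$ is term-by-term the series for $\mathrm Z_\alpha f(x,\omega)$; I would prove it first so that part a) can be reduced to the case $\alpha=1$. \emph{Part b)} is equally direct: periodicity follows from $e^{-2\pi i kn}=1$ for $k,n\in\Z$, and quasi-periodicity from the index shift $k\mapsto k+n$, which pulls out the factor $e^{2\pi i n\alpha\omega}$; the rearrangement is legitimate because of the absolute convergence established in a).

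For \emph{part a)} I would first treat $\alpha=1$: from $\abs{\mathrm Z_1 f(x,\omega)}\le\sum_{k\in\Z}\abs{f(x+k)}$ and the $1$-periodicity of the right-hand side in $x$, it follows that for $x\in[0,1]$ this is bounded by $\sum_{k\in\Z}\esssup_{t\in[0,1]}\abs{f(t+k)}=\norm{f}_W<\infty$, uniformly in $\omega$ since the phases have modulus one; this also yields absolute convergence of the series. The general case then follows from d) together with $f_\alpha\in W(\R)$, which is a routine comparison of the Wiener seminorms attached to the lattices $\Z$ and $\alpha\Z$. For the continuity assertion, when $f$, hence $f_\alpha$, is continuous, each partial sum $(y,\eta)\mapsto\sum_{\abs{k}\le K}f_\alpha(y+k)e^{-2\pi i k\eta}$ is continuous on $\R^2$, and the tail is dominated by $\sum_{\abs{k}>K}\esssup_{t\in[0,1]}\abs{f_\alpha(t+k)}$, the tail of a convergent series and hence uniformly small; the Weierstrass $M$-test then gives uniform convergence on $\R^2$, so $\mathrm Z_1 f_\alpha$, and with it $\mathrm Z_\alpha f$, is continuous.

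\emph{Part c)} is the Poisson summation formula in disguise and is where the actual work lies. I would fix $(x,\omega)$, put $g(t):=f(x+t)e^{-2\pi i t\omega}$ so that $\sum_{k\in\Z}g(\alpha k)=\mathrm Z_\alpha f(x,\omega)$, and compute $\hat g(\xi)=e^{2\pi i x(\omega+\xi)}\hat f(\omega+\xi)$. Since translations and modulations preserve $W(\R)$, we have $g\in W(\R)$, and $\hat g\in W(\R)$ precisely because $\hat f\in W(\R)$, so Poisson summation $\sum_{k\in\Z}g(\alpha k)=\tfrac1\alpha\sum_{j\in\Z}\hat g(\tfrac{j}{\alpha})$ is applicable; substituting the formula for $\hat g$ turns the right-hand side into $\tfrac1\alpha e^{2\pi i x\omega}\sum_{j\in\Z}\hat f(\omega+\tfrac{j}{\alpha})e^{2\pi i jx/\alpha}=\tfrac1\alpha e^{2\pi i x\omega}\mathrm Z_{1/\alpha}\hat f(\omega,-x)$, which is the assertion. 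The main obstacle---really the only delicate point---is justifying the \emph{pointwise} Poisson summation formula under these hypotheses; I would either invoke the standard version valid when both a function and its Fourier transform lie in the Wiener space, so that the periodization of $g$ is continuous with absolutely summable Fourier coefficients and equality is forced (cf. \cite{Groech:2001}), or, if the identity is only needed almost everywhere, deduce it from $L^1$-Poisson summation together with the Lebesgue differentiation theorem.
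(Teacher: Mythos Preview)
Your proof is correct and follows the standard route (in particular, the Poisson-summation derivation of part~c) is the textbook argument). Note, however, that the paper does not actually supply a proof of this lemma: it is stated as a collection of well-known properties with the remark that ``More facts can be found in \cite{Groech:2001}'', and is used without further justification. So there is nothing to compare against; your write-up simply fills in what the paper leaves to the literature.

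One minor remark on part~a): your reduction to $\alpha=1$ via part~d) is clean, but the claim ``$f_\alpha\in W(\R)$, which is a routine comparison of the Wiener seminorms attached to the lattices $\Z$ and $\alpha\Z$'' deserves one explicit line if you are writing this out in full, since the Wiener norm in the paper is defined with the unit lattice only. Alternatively, you can bypass the reduction entirely and bound $\sum_{k}|f(x+\alpha k)|$ directly by a finite multiple of $\|f\|_W$ (split each interval $[x+\alpha k, x+\alpha(k+1)]$ into at most $\lceil\alpha\rceil+1$ unit intervals); this avoids any appeal to part~d) before part~a) is established.
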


Because of the last property and since the considered function spaces are scaling invariant, we restrict ourselves to the case $\alpha = 1$ and therefore write $\mathrm Zf$, instead of $\mathrm Z_1f$. 
We also use the following connection of the Zak transform of TP functions of finite type and EB-splines, which was found recently in \cite{KloStoe:2014}.

\begin{theorem}[{\cite[Theorem 3.4]{KloStoe:2014}}]\label{b-splinezak}
Let $g\in L^1(\R)$ be a TP function of finite type, defined by its Fourier transform
$$\hat{g}(\omega) = \prod_{\nu=1}^m  (1+2\pi i \tfrac{\omega}{a_{\nu}})^{-1},$$
where $a_1,\ldots,a_m\in\R\setminus\{0\}$. 
With $\lambda_{\nu}:=-a_{\nu}$ and $B_\Lambda$ defined as in (\ref{expBspline}), we have
$$\,\mathrm Zg(x,\omega) = \prod_{\nu=1}^m \, \frac{a_{\nu}}{1-e^{-(a_{\nu}+2\pi i\omega)}}\, 
\mathrm ZB_\Lambda(x,\omega),\ \ (x,\omega)\in[0,1)\times[0,1).$$
\end{theorem}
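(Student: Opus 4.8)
The plan is to prove the statement of Theorem~\ref{b-splinezak} directly from the defining series of the Zak transform, exploiting the convolution structure of both sides. Writing $\hat g(\omega)=\prod_{\nu=1}^m(1+2\pi i\omega/a_\nu)^{-1}$, one recognizes $g$ as a convolution of one-sided exponentials: up to the shift $\sum_\nu a_\nu^{-1}$ that was deliberately discarded in the discussion around \eqref{darstellungexp}, $g$ is the inverse Fourier transform of $\prod_\nu(1+2\pi i\omega/a_\nu)^{-1}$, which factors as $\prod_\nu \widehat{h_\nu}$ with $h_\nu$ the appropriate exponential supported on a half-line. On the B-spline side, $B_\Lambda$ is by definition the convolution $\ast_{\nu=1}^m e^{\lambda_\nu(\cdot)}\chi_{[0,1)}$ with $\lambda_\nu=-a_\nu$. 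So both $g$ and $B_\Lambda$ are $m$-fold convolutions, and the scalar prefactor $\prod_\nu a_\nu/(1-e^{-(a_\nu+2\pi i\omega)})$ should be exactly the product of the ``single-factor'' ratios $\mathrm{Z}h_\nu/\mathrm{Z}(e^{\lambda_\nu(\cdot)}\chi_{[0,1)})$, or something very close to it.

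First I would establish the key algebraic fact that the Zak transform turns convolution into a (pointwise in $\omega$, convolution in $x$ over the period) product: for $f_1,f_2$ in a suitable class, $\mathrm{Z}(f_1\ast f_2)(x,\omega)=\int_0^1 \mathrm{Z}f_1(x-t,\omega)\,\mathrm{Z}f_2(t,\omega)\,dt$. This follows by inserting the definition of convolution into the Zak series and using quasi-periodicity (Lemma~\ref{Zakprop}b) to fold the $t$-integral onto $[0,1)$ while collecting the phase factors. Granting this, the theorem reduces to the single-factor identity: for $h(x)=|a|e^{-a(x+1/a)}\chi_{[0,\infty)}(\mathrm{sign}(a)(x+1/a))$ (equivalently, up to the discarded shift, $h=a\,e^{\lambda(\cdot)}$ on a half-line with $\lambda=-a$) one computes $\mathrm{Z}h(x,\omega)$ explicitly by summing the geometric series $\sum_k e^{\lambda(x+k)}e^{-2\pi ik\omega}$ over the appropriate range of $k$, obtaining $\frac{a}{1-e^{-(a+2\pi i\omega)}}$ times $e^{\lambda(\cdot)}\chi_{[0,1)}$ evaluated in the Zak sense; summing $\sum_k e^{\lambda(x+k)}\chi_{[0,1)}(x+k)e^{-2\pi i k\omega}$ just gives back $e^{\lambda x}$ on $[0,1)$. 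Multiplying these $m$ single-factor identities through the convolution-to-product rule yields the claimed formula, the scalar prefactor being $\omega$-dependent but $x$-independent and therefore pulling out of the $x$-convolution.

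The main obstacle I anticipate is bookkeeping rather than anything deep: one has to handle the two sign cases $a_\nu>0$ and $a_\nu<0$ (the half-line on which $h_\nu$ is supported flips direction, and the geometric series is summed over $k\ge 0$ versus $k<0$, but the closed form $a_\nu/(1-e^{-(a_\nu+2\pi i\omega)})$ comes out the same in both cases), and one must carefully track the shifts by $1/a_\nu$ that were dropped when passing from \eqref{TPfun2} to the simplified form — these contribute pure phase factors $e^{2\pi i\omega/a_\nu}$ on the $g$-side which must be matched against the normalization of $B_\Lambda$; since $B_\Lambda$ as defined in \eqref{expBspline} uses $\chi_{[0,1)}$ without the $1/a_\nu$ shift, the shifts should cancel cleanly, but this needs to be checked. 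One also needs to verify convergence/interchange of sum and integral, which is where membership in the Wiener space and the decay of $g$ (Theorem~\ref{convrate} and the remark after it) justify applying Lemma~\ref{Zakprop}. An alternative, and perhaps cleaner, route would bypass the convolution formula entirely: compute $\mathrm{Z}g$ and $\mathrm{Z}B_\Lambda$ separately via Lemma~\ref{Zakprop}c, which expresses each Zak transform through the Zak transform of the Fourier transform, and then compare — but since $\hat g$ and $\hat{B_\Lambda}$ differ precisely by the factor $\prod_\nu (e^{\lambda_\nu-2\pi i\omega}-1)/(\lambda_\nu-2\pi i\omega)\cdot(1+2\pi i\omega/a_\nu)=\prod_\nu(e^{-(a_\nu+2\pi i\omega)}-1)/(a_\nu)$, the Poisson-summation step that underlies Lemma~\ref{Zakprop}c turns this multiplicative relation between Fourier transforms into exactly the multiplicative relation between Zak transforms claimed in the theorem. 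I would present the convolution argument as the main line and mention the Fourier-side computation as the conceptual reason it works.
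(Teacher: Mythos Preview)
The paper does not prove Theorem~\ref{b-splinezak}; it is quoted verbatim from \cite[Theorem~3.4]{KloStoe:2014} and used as a black box, so there is no in-paper argument to compare against.

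Both of your outlined routes are correct. Your ``alternative'' via Lemma~\ref{Zakprop}(c) is in fact the cleaner one and deserves to be the main line rather than an afterthought: writing $\mathrm{Z}g(x,\omega)=e^{2\pi ix\omega}\sum_{k\in\Z}\hat g(\omega+k)e^{2\pi ikx}$ and likewise for $B_\Lambda$, the ratio
\[
\frac{\hat g(\omega)}{\hat B_\Lambda(\omega)}=\prod_{\nu=1}^m\frac{-a_\nu-2\pi i\omega}{e^{-(a_\nu+2\pi i\omega)}-1}\cdot\frac{1}{a_\nu+2\pi i\omega}=\prod_{\nu=1}^m\frac{a_\nu}{1-e^{-(a_\nu+2\pi i\omega)}}
\]
is $1$-periodic in $\omega$, hence the same for every shift $\omega+k$, and pulls straight out of the sum. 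This is a two-line proof and sidesteps all the bookkeeping you list as obstacles. Your convolution argument also goes through: the identity $\mathrm{Z}(f_1\ast f_2)(x,\omega)=\int_0^1\mathrm{Z}f_1(x-t,\omega)\,\mathrm{Z}f_2(t,\omega)\,dt$ is correct (your derivation via quasi-periodicity is the standard one), and the single-factor geometric series indeed gives the same closed form $a_\nu/(1-e^{-(a_\nu+2\pi i\omega)})\cdot e^{-a_\nu x}$ for both signs of $a_\nu$. One minor correction: the theorem as stated already takes $\hat g(\omega)=\prod_\nu(1+2\pi i\omega/a_\nu)^{-1}$ \emph{without} the phase factors $e^{2\pi i\omega/a_\nu}$, so there are no $1/a_\nu$ shifts to track; that worry is moot.
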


Note that this factorization of the Zak transform of TP functions of finite type holds on $\R^2$ by quasi-periodicity of the transform. 

Next, we complexify the argument $\omega$ of the Zak transform and show the convergence of the transforms of TP functions of finite type to the transforms of the related TP functions of infinite type.

\begin{definition}
The complexified Zak transform of a function $f\in L^2(\R)$ is given by 
\begin{align*}
\mathrm{Z}f(x,s) = \sum_{k\in\Z} f(x+k)\,e^{-2\pi i ks} = \sum_{k\in\Z} f(x+k)\,e^{-2\pi i k\omega}e^{2\pi k\tau},
\end{align*}
where $x,\omega,\tau\in\R$, $s=\omega + i\tau\in\C$ are chosen, such that the series converges.
\end{definition}

Schoenberg proved in \cite{Schoenberg:1951}, that integrable TP functions $\tilde{g}$ decay exponentially,
\begin{align*}
\lim_{x\rightarrow\pm\infty}e^{xs}\tilde{g}(x) = 0,\qquad -a_0<s< a_0.
\end{align*}
Hence, their complexified Zak transforms exist for all $\abs{\tau} < \frac{a_0}{2\pi}$.

\begin{theorem}\label{Zakconv}
Let $g$ be a TP function of infinite type and $g_n$ the TP function of type $n$ as in (\ref{TPfun}). Then
for a fixed $x_0\in[0,1)$ the transforms $\mathrm Zg_n(x_0,\cdot)$ and $\mathrm Zg(x_0,\cdot)$ are holomorphic in the strip $S_\xi=\{s\in\C\mid\abs{\Im(s)} \leq \xi\}$, 
whenever $0\leq\xi < \frac{a_0}{2\pi}$, and 
$$\lim_{n\rightarrow\infty} \abs{\mathrm Zg(x,s)-\mathrm Zg_n(x,s)} = 0,$$
uniformly for all $s=\omega+i\tau$ in the strip $S_\xi$, $0\leq \xi < \frac{a_0}{2\pi}$, and all $x\in[0,1)$.
\end{theorem}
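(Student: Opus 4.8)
The plan is to reduce the convergence of the complexified Zak transforms to the weighted pointwise convergence $|g(x)-g_n(x)|e^{\sigma|x|}\to 0$ already established in Theorem~\ref{convrate}. First I would fix $\xi$ with $0\le\xi<\frac{a_0}{2\pi}$ and pick $\sigma$ with $2\pi\xi<\sigma<a_0$. For $s=\omega+i\tau$ with $|\tau|\le\xi$ and $x\in[0,1)$, write
$$
\abs{\mathrm Zg(x,s)-\mathrm Zg_n(x,s)}
  \le \sum_{k\in\Z}\abs{g(x+k)-g_n(x+k)}\,e^{2\pi k\tau}
  \le \sum_{k\in\Z}\abs{g(x+k)-g_n(x+k)}\,e^{2\pi\xi\abs{k}}.
$$
Since $|x+k|\ge|k|-1$, the factor $e^{2\pi\xi|k|}$ is dominated by $e^{2\pi\xi}e^{2\pi\xi|x+k|}\le e^{2\pi\xi}e^{\sigma|x+k|}$, so the sum is at most $e^{2\pi\xi}\sum_{k\in\Z}\bigl(|g(x+k)-g_n(x+k)|e^{\sigma|x+k|}\bigr)$. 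Here I would bound $e^{\sigma|x+k|}$ crudely, e.g. by $C_\sigma e^{-\sigma'|x+k|}\cdot e^{(\sigma+\sigma')|x+k|}$ — actually it is cleaner to interpose a second exponent: choose $\sigma<\sigma''<a_0$ and use $e^{\sigma|y|}=e^{-(\sigma''-\sigma)|y|}e^{\sigma''|y|}$, so that the summand is $\le \bigl(\sup_{y}|g(y)-g_n(y)|e^{\sigma''|y|}\bigr)\cdot e^{-(\sigma''-\sigma)|x+k|}$, and $\sum_k e^{-(\sigma''-\sigma)|x+k|}$ is a convergent geometric-type series bounded uniformly in $x\in[0,1)$. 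By Theorem~\ref{convrate} the supremum tends to $0$, giving the claimed uniform convergence over $S_\xi\times[0,1)$.

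For holomorphy, I would argue that each complexified Zak transform is a locally uniform limit of its partial sums $\sum_{|k|\le K} g(x_0+k)e^{-2\pi iks}$, which are entire in $s$; the exponential decay $e^{s y}g(y)\to 0$ for $-a_0<s<a_0$ (Schoenberg, quoted just before the statement) together with a standard argument shows that $|g(x_0+k)|\le C e^{-\sigma''|k|}$ for any $\sigma''<a_0$, whence for $|\Im s|\le\xi$ the tail $\sum_{|k|>K}|g(x_0+k)|e^{2\pi\xi|k|}$ is as small as we like, uniformly on $S_\xi$. Thus the series converges locally uniformly in the open strip $|\Im s|<\frac{a_0}{2\pi}$, and Weierstrass' theorem gives that $\mathrm Zg(x_0,\cdot)$ and $\mathrm Zg_n(x_0,\cdot)$ are holomorphic there, in particular on the closed strip $S_\xi$ for $\xi<\frac{a_0}{2\pi}$.

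The main obstacle — such as it is — is purely bookkeeping: one must insert a chain of exponents $2\pi\xi<\sigma<\sigma''<a_0$ so that the weight $e^{2\pi\xi|k|}$ coming from complexifying $\omega$ is absorbed by the weight $e^{\sigma''|x+k|}$ supplied by Theorem~\ref{convrate}, while still leaving a decaying exponential $e^{-(\sigma''-\sigma)|x+k|}$ to make the sum over $k$ converge and be bounded uniformly in $x\in[0,1)$. No new analytic input beyond Theorem~\ref{convrate} and Schoenberg's exponential-decay estimate is needed; the shift $|x+k|\ge|k|-1$ and the uniformity of geometric sums in $x\in[0,1)$ handle the rest.
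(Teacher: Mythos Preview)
Your argument is correct and, for the uniform convergence, essentially identical to the paper's: both insert an intermediate exponent between $2\pi\xi$ and $a_0$, invoke Theorem~\ref{convrate} to bound $\abs{g(x+k)-g_n(x+k)}$ by $\varepsilon$ times a decaying exponential, and then sum a geometric series uniformly in $x\in[0,1)$ (the paper uses a single intermediate value $2\pi c$ where you use the pair $\sigma<\sigma''$, but this is cosmetic). The one organizational difference is in the holomorphy step: the paper obtains holomorphy of $\mathrm Zg_n(x_0,\cdot)$ from the EB-spline factorization of Theorem~\ref{b-splinezak} and then deduces holomorphy of $\mathrm Zg(x_0,\cdot)$ \emph{a posteriori} as the uniform limit of the $\mathrm Zg_n$, whereas you treat both cases directly via exponential decay of $g,g_n$ and Weierstrass' theorem on the partial sums; your route is a touch more self-contained since it does not appeal to the B-spline connection.
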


\begin{proof}
The holomorphism of $\mathrm Zg_n(x_0,\cdot)$ of TP functions of finite type easily follows from Theorem~\ref{b-splinezak}, which provides an expression as a finite sum of exponentials, multiplied with a function with singularities outside of $S_{\xi}$. The holomorphism in the case of TP functions of infinite type is given by the uniform convergence, which we prove next.

Let $x\in[0,1)$ and $c\in\R$ with $2\pi\xi<2\pi c<a_0$. By Theorem \ref{convrate}, for a given $\varepsilon>0$, there exists $n_0\in\N$, such that for every $n\geq n_0$
\begin{align*}
\abs{\mathrm Zg(x,s)-\mathrm Zg_n(x,s)} &\leq \sum_{k\in\Z}\abs{g(x+k)-g_n(x+k)}\,e^{2\pi\abs{k\tau}}\\
&\leq \varepsilon\ \sum_{k\in\Z}e^{-2\pi c\abs{x+k}}\, e^{2\pi\abs{k\tau}}\\
&\leq \varepsilon\, \left( \sum_{k=1}^{\infty} e^{2\pi k(\abs{\tau}-c)} + e^{-2\pi cx} + e^{2\pi c}\,\sum_{k=-\infty}^{-1} e^{2\pi k(c-\abs{\tau})} \right)\\
&\leq \varepsilon\, \left(1+e^{2\pi c}\right) \sum_{k=0}^{\infty}e^{2\pi k(\abs{\tau}-c)} \leq \varepsilon\, \left(1+e^{2\pi c}\right)\, \frac{e^{2\pi(c-\abs{\tau})}}{2\pi(c-\abs{\tau})}.
\end{align*}
This proves the second part of the Theorem and implies, that the Zak transform of a TP function $\mathrm Zg(x_0,\cdot)$ of infinite type is the uniform limit of holomorphic functions, in $S_\xi$, which completes the first part of the proof.
\end{proof}

Next, we will use this convergence property to show, by arguments of complex analysis, that the transforms have exactly one zero in their fundamental domain. For this we will use the Theorem of Hurwitz.

\begin{lemma}[Hurwitz]\label{Hurwitz}
Let $D\subset\C$ be a domain and $(f_n)$ be a sequence of holomorphic functions in $D$, which converges locally uniformly to a function $f$. If every $f_n$ has at most $k$ zeros in $D$, then $f$ has at most $k$ zeros or $f(z)=0$, for all $z\in D$.
\end{lemma}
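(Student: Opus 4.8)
The plan is to argue by contradiction, using Rouché's theorem (equivalently, the argument principle). The starting point is the Weierstrass convergence theorem: since the $f_n$ are holomorphic on $D$ and converge locally uniformly, the limit $f$ is holomorphic on $D$, and the derivatives $f_n'$ converge to $f'$ locally uniformly as well. This is also what forces the dichotomy in the statement: either $f\equiv 0$, or $f$ is a holomorphic function that is not identically zero on the domain $D$, in which case its zeros are isolated and it makes sense to speak of ``the number of zeros'' of $f$ in $D$.

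So assume $f\not\equiv 0$ and, towards a contradiction, suppose that $f$ has at least $k+1$ distinct zeros $z_0,\ldots,z_k\in D$. Because the zeros of $f$ are isolated and $D$ is open, I can choose a radius $\rho>0$ so small that the closed disks $\overline{B(z_j,\rho)}$ for $j=0,\ldots,k$ are pairwise disjoint, are all contained in $D$, and contain no zero of $f$ other than $z_j$ itself; in particular $f$ is zero-free on the compact set $\Gamma=\bigcup_{j=0}^k\partial B(z_j,\rho)$, so $\delta:=\min_{z\in\Gamma}\abs{f(z)}>0$.

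Now I would feed in the convergence: since $\Gamma\subset D$ is compact, there is an $n_0$ with $\abs{f_n(z)-f(z)}<\delta\le\abs{f(z)}$ for all $z\in\Gamma$ and all $n\ge n_0$. Fix such an $n$. By Rouché's theorem applied on each circle $\partial B(z_j,\rho)$, the functions $f_n$ and $f$ have the same number of zeros, counted with multiplicity, inside $B(z_j,\rho)$; as $f(z_j)=0$, this common number is at least $1$. Since the $k+1$ disks are pairwise disjoint and contained in $D$, the function $f_n$ has at least $k+1$ zeros in $D$, contradicting the hypothesis that each $f_n$ has at most $k$ zeros there. Hence $f$ has at most $k$ zeros, which proves the claim. (If one prefers to avoid Rouché, the integers $\frac{1}{2\pi i}\oint_{\partial B(z_j,\rho)}\frac{f_n'}{f_n}\,dz$ count the zeros of $f_n$ in $B(z_j,\rho)$, and they converge to $\frac{1}{2\pi i}\oint_{\partial B(z_j,\rho)}\frac{f'}{f}\,dz\ge1$ because $f_n\to f$ and $f_n'\to f'$ uniformly on $\Gamma$ with $\abs{f}\ge\delta$ there; being integers, they are eventually $\ge1$.)

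There is no genuine obstacle here — the argument is entirely standard; the only point needing a little care is choosing the circles $\partial B(z_j,\rho)$ so that they are disjoint, lie in $D$, and miss the zero set of $f$, which is precisely where the hypotheses ``$f\not\equiv0$'' and ``$D$ a domain (hence open)'' are used.
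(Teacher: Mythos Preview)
Your argument is correct and is the standard textbook proof of Hurwitz's theorem via Rouch\'e (or, equivalently, the argument principle). Note that the paper does not give its own proof of this lemma: it is quoted as the classical Hurwitz theorem and used as a black box, so there is nothing to compare against. One small remark: you assume $f$ has $k+1$ \emph{distinct} zeros, which matches the statement if ``zeros'' are counted without multiplicity; if one reads the statement with multiplicities, the same Rouch\'e step on a single disk around a zero of multiplicity $\ge k+1$ (or on finitely many disks whose multiplicities sum to $\ge k+1$) gives the contradiction just as easily. For the paper's application only the case $k=0$ is needed, so this distinction is immaterial there.
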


The following result in \cite{KloStoe:2014} shows, that the Zak transform of TP functions of finite type $m\geq 2$ has exactly one zero in the fundamental domain. 

\begin{theorem}[{\cite[Corollary 3.5]{KloStoe:2014}}]\label{zakzeroesg}
Let  $g_n$ be a totally positive function of finite type $n\ge 2$. 
Then there exists $\tilde{x}\in[0,1)$, such that 
$\mathrm Zg_n(\tilde{x},\tfrac{1}{2}) = 0$, and $\mathrm Zg_n(x,\omega) \neq 0$ for all 
$(x,\omega) \in [0,1)^2\setminus\{(\tilde{x},\tfrac{1}{2})\}$.
\end{theorem}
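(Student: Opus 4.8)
The plan is to transfer the whole question to the exponential B-spline via the factorization in Theorem~\ref{b-splinezak} and then to analyse $\mathrm ZB_\Lambda$ by reducing it, for each fixed $x$, to a single polynomial in $z=e^{-2\pi i\omega}$ whose root distribution is governed by total positivity. First I would observe that the scalar prefactor $\prod_{\nu=1}^{m}a_\nu\bigl(1-e^{-(a_\nu+2\pi i\omega)}\bigr)^{-1}$ in Theorem~\ref{b-splinezak} is finite and nonzero for every real $\omega$, because $a_\nu+2\pi i\omega\in2\pi i\Z$ would force $a_\nu=0$. Hence on $[0,1)^2$ the zero sets of $\mathrm Zg_n$ and of $\mathrm ZB_\Lambda$ coincide, and it suffices to prove the claim for $\mathrm ZB_\Lambda$ with $m=n\geq2$.

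Since $\mathrm{supp}\,B_\Lambda=[0,m]$, for a fixed $x\in[0,1)$ the defining series collapses to the finite sum $\mathrm ZB_\Lambda(x,\omega)=\sum_{k=0}^{m-1}B_\Lambda(x+k)\,z^k=:P_x(z)$ with $z=e^{-2\pi i\omega}$, a polynomial of degree $m-1$ whose coefficients are the translates $B_\Lambda(x+k)$; for $x\in(0,1)$ these all lie in the interior of the support and are therefore strictly positive. The key structural input is that $B_\Lambda$, being a convolution of the totally positive functions $e^{\lambda_j(\cdot)}\chi_{[0,1)}$, is itself a P\'olya frequency (i.e. totally positive) function, so that the finite sequence $\bigl(B_\Lambda(x+k)\bigr)_k$ is a P\'olya frequency sequence; by the Aissen--Schoenberg--Whitney characterisation of such sequences this forces every root of $P_x$ to be real and negative, see \cite{Karl:1968}. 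Consequently the only possible zero of $P_x$ on the unit circle is $z=-1$, that is $\omega=\tfrac12$, which already yields $\mathrm ZB_\Lambda(x,\omega)\neq0$ for all $\omega\in[0,1)\setminus\{\tfrac12\}$.

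Next I would settle the line $\omega=\tfrac12$ itself. Here $F(x):=\mathrm ZB_\Lambda(x,\tfrac12)=\sum_{k}(-1)^kB_\Lambda(x+k)$ is real-valued and continuous (as $B_\Lambda\in C^{m-2}$, $m\geq2$), and by the quasi-periodicity of Lemma~\ref{Zakprop}(b) it satisfies $F(x+1)=e^{\pi i}F(x)=-F(x)$. The intermediate value theorem therefore produces a zero $\tilde x\in[0,1)$, giving existence. For uniqueness I would track the negative roots $\rho_1(x)<\dots<\rho_{m-1}(x)<0$ of $P_x$ as $x$ runs through $(0,1)$: as $x\to0^+$ the constant term $B_\Lambda(x)$ tends to $0$, so the largest root climbs to $0^-$, while as $x\to1^-$ the leading coefficient $B_\Lambda(x+m-1)$ tends to $0$, so the smallest root escapes to $-\infty$; moreover the ladder identity $\lim_{x\to0^+}P_x(z)=z\,\lim_{x\to1^-}P_x(z)$ identifies the remaining roots across the period with an index shift by one. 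This monodromy forces exactly one continuous root-branch to sweep the whole ray $(-\infty,0)$ over one period, hence to pass through $-1$, and $F(\tilde x)=0$ holds precisely when such a branch equals $-1$. Since $-1$ is then a \emph{simple} root of $P_{\tilde x}$, the zero is simple, and combining this with the fact that $\mathrm ZB_\Lambda$ has total winding one around the boundary of the fundamental cell (a consequence of the factor $e^{2\pi i\omega}$ in the quasi-periodicity relation) rules out further crossings.

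The main obstacle is the uniqueness step: establishing that the sweeping root-branch meets the value $-1$ exactly once, equivalently that $F$ changes sign only once per period. The negative-real-root property guarantees that every crossing is transversal, but monotonicity of the branch is genuinely delicate for large $m$ and must be extracted either from the interlacing of the roots of $P_x$ and $P_{x'}$ for neighbouring $x$ (itself a total-positivity phenomenon) or from the winding-number count, which pins the algebraic number of sign changes to one. Verifying the negative-real-root property uniformly in $x$, together with the degenerate endpoint behaviour at $x\in\{0\}$ and the presence of repeated weights $\eta_j$, is the other point requiring care.
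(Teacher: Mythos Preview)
The paper does not give its own proof of this statement---it is quoted from \cite{KloStoe:2014}---but the method of that reference is reproduced here in the proof of Theorem~\ref{complZakzero}, so that is the natural comparison. Your reduction to $\mathrm ZB_\Lambda$ via Theorem~\ref{b-splinezak} is exactly the paper's first move. From there the arguments diverge. For $\omega\neq\tfrac12$ the paper fixes $\omega$ and regards $h(x)=\mathrm ZB_\Lambda(x,\omega)$ as a piecewise exponential sum in $x$; a differential operator $\mathcal D$ of order $\gamma-1$ kills all but one exponential, and Rolle's theorem together with the sign pattern of $\Re(b\,e^{2\pi ik\omega})$ produces a contradiction to any zero. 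Your route---fix $x$, read $\mathrm ZB_\Lambda(x,\omega)$ as the polynomial $P_x(z)$ in $z=e^{-2\pi i\omega}$, and use Aissen--Schoenberg--Whitney to confine all roots to $(-\infty,0)$---is genuinely different and, for real $\omega$, shorter and cleaner. It is also correct: each factor $e^{\lambda_j(\cdot)}\chi_{[0,1)}$ is totally positive (row- and column-scale the collocation matrix of $\chi_{[0,1)}$ by $e^{\lambda_j x_i}$ and $e^{-\lambda_j y_k}$), hence so is $B_\Lambda$, and the integer samples form a PF sequence. What the paper's approach buys and yours does not is the complexified extension of Theorem~\ref{complZakzero}: the differential-operator argument survives for $s$ in a horizontal strip, whereas ASW is intrinsically a unit-circle statement.

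The genuine gap is in uniqueness on the line $\omega=\tfrac12$, as you yourself signal. Your monodromy picture---one root branch leaving through $0$ and another entering from $-\infty$ as $x$ traverses $(0,1)$, with the remaining branches shifting index by one---is correct, but it does not by itself prevent a branch from oscillating across $-1$ several times while a neighbour avoids it; the net cyclic shift would be unaffected. The winding-number remark counts zeros of $\mathrm ZB_\Lambda$ in the cell \emph{algebraically}, so additional zero pairs of opposite index are not excluded, and your assertion that $-1$ is a simple root of $P_{\tilde x}$ is not justified (PF polynomials may have repeated negative roots). The argument actually used in \cite{KloStoe:2014}, visible in this paper as Lemma~\ref{monotonexp} and Corollary~\ref{Bmonotone}, closes this by a second pass of the differential-operator technique: one shows that $F(x)=\mathrm ZB_\Lambda(x,\tfrac12)$ is monotone on each interval of length one, and together with $F(x+1)=-F(x)$ this forces exactly one zero per period. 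Within your framework the clean substitute would be to prove that the roots of $P_x$ and $P_{x'}$ interlace for $x<x'$ in $(0,1)$---this is again a total-positivity fact, but it is a separate lemma you would still need to supply.
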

Next, we extend this result and show, that the complexified Zak transform has no zero in $[0,1)\times \{s=\omega+i\tau\mid \abs{\omega}<\frac{1}{2},\abs{\tau}<\frac{a_0}{2\pi}\}$. Then Lemma~\ref{Hurwitz} implies that this property holds for TP functions of infinite type. The main idea of the proof is counting sign changes and using the quasi-periodicity to construct a contradiction of having a zero in the given domain. Following \cite{deBoor:1976}, we say that a function $f:[a,b]\to\R$ 
has at least $p$ strong sign changes, if there exists a nondecreasing sequence
$(\tau_j)_{0\le j\le p}$ in $[a,b]$ with $f(\tau_0)\ne 0$ and, in case $p\ge 1$, 
$f(\tau_{j-1})f(\tau_j)<0$ for all $j=1,\ldots,p$. 
The supremum of the number of strong sign changes of $f$  is denoted by $S^-(f)$.
Similarly, we define the total number of sign changes $S^-(c)$ 
of a sequence of real numbers $c=(c_k)_{0\le k\le N}$.

\begin{theorem}\label{complZakzero}
Let $\tilde{g}$ be a continuous TP function, defined by (\ref{TPfun}), including the infinite type. Then for the complexified Zak transform, it holds that
$$\mathrm Z\tilde{g}(x,s) \neq 0 \qquad ,\text{ for }x\in[0,1)\text{ and } s\in(-\tfrac{1}{2},\tfrac{1}{2})\times i(-\tfrac{a_0}{2\pi},\tfrac{a_0}{2\pi}).$$
\end{theorem}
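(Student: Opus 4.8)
The plan is to prove the statement first for totally positive functions of finite type --- where, because $\tilde g$ is assumed continuous, the type is some $m\ge 2$ --- and then to pass to the infinite type by Hurwitz' theorem (Lemma~\ref{Hurwitz}), using Theorem~\ref{Zakconv}. For the finite type the idea is a tilting (modulation) trick that converts the complexified Zak transform into an ordinary one. Fix a finite-type $g_n$ with parameters $a_1,\dots,a_m$ and a real $\tau_0$ with $|\tau_0|<\tfrac{a_0}{2\pi}$. A direct computation from the definition gives, with $g_n^{\tau_0}(y):=e^{2\pi\tau_0 y}g_n(y)$,
$$\mathrm Zg_n(x,\omega+i\tau_0)=e^{-2\pi\tau_0 x}\,\mathrm Zg_n^{\tau_0}(x,\omega),\qquad \omega\in\R .$$
Since $g_n$ decays exponentially with rate $a_0>2\pi|\tau_0|$, the function $g_n^{\tau_0}$ is integrable, and its Fourier transform is $\hat g_n(\,\cdot\,+i\tau_0)$, which up to a constant equals $\prod_{\nu=1}^m\bigl(1+2\pi i\omega/a_\nu'\bigr)^{-1}$ with $a_\nu':=a_\nu-2\pi\tau_0$. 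The bound $|\tau_0|<\tfrac{a_0}{2\pi}$ forces every $a_\nu'$ to be nonzero with the same sign as $a_\nu$, so the constant $\prod_\nu a_\nu/a_\nu'$ is positive, and $g_n^{\tau_0}$ is a positive multiple of a shifted continuous TP function of finite type $m\ge 2$. Applying Theorem~\ref{zakzeroesg} to it, together with periodicity in $\omega$, gives $\mathrm Zg_n^{\tau_0}(x,\omega)\ne 0$ for all $x$ and all $|\omega|<\tfrac12$; undoing the tilt, $\mathrm Zg_n(x,s)\ne 0$ for $x\in[0,1)$ and $s\in(-\tfrac12,\tfrac12)\times i(-\tfrac{a_0}{2\pi},\tfrac{a_0}{2\pi})$.

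For the infinite type, fix $x_0\in[0,1)$ and set $D:=(-\tfrac12,\tfrac12)\times i(-\tfrac{a_0}{2\pi},\tfrac{a_0}{2\pi})$, a bounded connected open set lying in $S_\xi$ for every $\xi<\tfrac{a_0}{2\pi}$. By Theorem~\ref{Zakconv} the maps $\mathrm Zg_n(x_0,\cdot)$ and $\mathrm Zg(x_0,\cdot)$ are holomorphic on $D$ and $\mathrm Zg_n(x_0,\cdot)\to\mathrm Zg(x_0,\cdot)$ locally uniformly on $D$; and since $\min_{\nu\le n}|a_\nu|\ge a_0$, the finite-type step shows that no $\mathrm Zg_n(x_0,\cdot)$ with $n\ge 2$ vanishes on $D$. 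Lemma~\ref{Hurwitz} with $k=0$ then yields that $\mathrm Zg(x_0,\cdot)$ has no zeros in $D$ or vanishes identically on $D$; the second alternative is impossible, because $\mathrm Zg(x_0,0)=\sum_{k\in\Z}g(x_0+k)>0$ --- here $g$ is nonnegative, continuous, not identically zero and strictly positive on the interior of its support, which is an interval containing a half-line, so $g(x_0+k)>0$ for at least one $k$. Since $x_0\in[0,1)$ was arbitrary, the theorem follows.

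The main obstacle is the finite-type step, i.e. moving from real $\omega$ (Theorem~\ref{zakzeroesg}) to complex $s$ in the strip. The tilting identity reduces this cleanly to the real statement, but one must verify that modulation keeps $g_n^{\tau_0}$ inside the class of continuous finite-type TP functions, that is, the sign and nonvanishing conditions on $a_\nu-2\pi\tau_0$ and the positivity of the resulting normalisation. Equivalently, through Theorem~\ref{b-splinezak}, the assertion is that the polynomial $z\mapsto\sum_k B_\Lambda(x+k)z^k$, whose coefficients are nonnegative B-spline values, has only negative real zeros --- which is exactly where the sign-change (variation-diminishing) properties of B-splines enter. The infinite-type step is then routine, the only delicate point being the exclusion of the identically vanishing limit in Hurwitz' theorem.
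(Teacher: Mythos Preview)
Your proof is correct, and the infinite-type step via Hurwitz matches the paper exactly. The finite-type step, however, is genuinely different: the paper reduces to the associated EB-spline $B_\Lambda$ via Theorem~\ref{b-splinezak} and then reruns, for the complexified transform, the full sign-change\slash Rolle argument with the differential operator $\mathcal D=D_1^{\sigma_1-1}\prod_{j\ge 2}D_j^{\sigma_j}$, deriving a contradiction between the lower bound $S^-(\mathcal Df)\ge N-\gamma+1$ and the upper bound $S^-(\mathcal Df)\le 2N|\omega|$ on $[0,N]$. Your route bypasses this entirely: the tilting identity $\mathrm Zg_n(x,\omega+i\tau_0)=e^{-2\pi\tau_0 x}\,\mathrm Zg_n^{\tau_0}(x,\omega)$ with $g_n^{\tau_0}=e^{2\pi\tau_0(\cdot)}g_n$ is correct, and your verification that $g_n^{\tau_0}$ is a positive multiple of a shifted finite-type TP function with parameters $a_\nu'=a_\nu-2\pi\tau_0$ (same sign, nonzero because $|2\pi\tau_0|<a_0\le|a_\nu|$) is exactly what is needed to invoke Theorem~\ref{zakzeroesg} as a black box. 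This is more economical --- it recycles the real result rather than reproving it --- and also makes transparent \emph{why} the strip has half-width $a_0/(2\pi)$: that is precisely the range of $\tau_0$ for which the tilted parameters $a_\nu'$ stay admissible. The paper's approach, while longer, has the compensating virtue of being self-contained and of setting up the differential-operator machinery reused later in Lemma~\ref{monotonexp} and Corollary~\ref{Bmonotone}.
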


\begin{proof}
With Theorem~\ref{b-splinezak}, the finite case $\tilde{g}=g_n$, $n\in\N$, results by proving the same property for the Zak transform of the associated EB-spline $B_\Lambda$. We will show this by following the proof in \cite{KloStoe:2014}. Again we use the notation 
$$\Lambda=(\eta_1,\ldots,\eta_1,\ldots,\eta_r,\ldots,\eta_r)$$ 
with multiplicities $\mu_j$ of  the pairwise distinct weights $\eta_j$. For a fixed $s\in(-\tfrac{1}{2},\tfrac{1}{2})\times (-\tfrac{a_0}{2\pi},\tfrac{a_0}{2\pi})$, we
consider the complex valued function $h:=\zak B_\Lambda( \cdot,s)$. 
By (\ref{eq:Bpiecewise}), we obtain for $x\in[0,1)$ 
\begin{align}\label{eq:EBspline}
  h(x)=\sum_{k=0}^{n-1} B_\Lambda(x+k)e^{-2\pi iks} &=
	\sum_{k=1}^{n} \sum_{j=1}^r p_j^{(k)}(x)e^{\eta_jx} e^{-2\pi iks} \notag\\
       &= \sum_{j=1}^r \underbrace{\sum_{k=1}^{n} p_j^{(k)}(x)e^{-2\pi iks}}_{:=q_j(x)} 
			e^{\eta_jx},
\end{align}
where $q_j$ are complex polynomials of degree $\sigma_j-1\le \mu_j-1$. 
Some of the $q_j$ are nonzero, since the shifts of EB-splines are locally linearly independent. This means that,
if $\Re(h)$ vanishes on an interval $(a,b)\in\R$, then all coefficients $\Re(e^{-2\pi iks})$ with $\mathrm{supp}B_\Lambda(\cdot+k)\cap(a,b)\neq\emptyset$
vanish as well. Since $\abs{\Re(s)}<\tfrac{1}{2}$, no consecutive coefficients of $\Re(h)$ vanish simultaneously. 
Therefore there is no non-empty interval where $\Re(h)$ is identically zero.
We let $\sigma_j=0$ if $q_j=0$
and define $\gamma=\sigma_1+\ldots+\sigma_r$. 
Without loss of generality, we can assume $\sigma_1\ge 1$, that is, the term
$q_1(x)e^{\eta_1 x}$  in $h|_{[0,1)}$ is nonzero. 
We use the identity
$$
    e^{\eta_j x} \frac{d}{dx}\left( e^{-\eta_j x}h(x)\right)=q_j'(x)e^{\eta_j x}+\sum_{k\ne j} 
		((\eta_k-\eta_j)q_k(x)+q_k'(x)))e^{\eta_k x}.
$$ 
Writing $D_j$ for the differential operator on the left hand side and \newline
$\mathcal{D}:=D_{1}^{\sigma_1-1}\prod_{j=2}^{r}D_{j}^{\sigma_j}$, we obtain
$$
  \mathcal{D} h(x)
= be^{\eta_1x},\qquad x\in (0,1),
$$
with  a nonzero constant $b\in\C$. The quasi-periodicity of $h$ leads directly to
\begin{align*}
   \mathcal{D}h(x)= be^{\eta_1(x-k)}\,e^{2\pi iks},\qquad x\in (k,k+1),
\end{align*}
for all $k\in\Z$. 

Now we assume 
that there exists $\tilde x\in[0,1)$ with
$\zak B_\Lambda(\tilde x,s)=0$. 
By quasi-periodicity of $h=\zak B_\Lambda(\cdot,s)$, the function
$ f={\rm Re}\, h$
vanishes at all points $\tilde x+k$, $k\in\Z$, and these points are isolated zeros of $f$
by local linear independence again.
This guarantees that $f$ has at least $N\in\N$ isolated zeros in $[0,N]$. 
Note that $\mathcal{D}$ is a differential operator of order $\gamma-1\le m-1$. 
Since  $f\in C^{m-2}(\R)$, with $f^{(m-2)}$ absolutely continuous,
we obtain by Rolle's
theorem that
\begin{equation}\label{eq:signchange}
  S^-(\mathcal{D}f)\ge N-\gamma+1\qquad \hbox{on}\quad [0,N].
	\end{equation}
However, 
on each interval $[k,k+1)$ with $k\in\Z$ and $s=\omega+i\tau$, $\omega,\tau\in\R$, the sign of $\mathcal{D}f$ is fixed by
$$
   {\rm sign}\,(\mathcal{D}f)(x) = 
	{\rm sign}\, {\rm Re}\left(b\,e^{2\pi ik(\omega+i\tau)}\right) = {\rm sign}\, {\rm Re}\left(b\,e^{2\pi ik\omega}\right),\qquad x\in[k,k+1).
$$
This implies
$$
   S^-(\mathcal{D}f)\le  2N|\omega|\qquad\hbox{on}\quad [0,N],$$
which is a contradiction to \eqref{eq:signchange} for $|\omega|<\tfrac{1}{2}$ and 
large $N$. This completes the proof for TP functions of finite type (and their associated EB-splines).

The Zak transform of TP functions of infinite type is not identically zero on $(-\tfrac{1}{2},\tfrac{1}{2})\times i(-\tfrac{a_0}{2\pi},\tfrac{a_0}{2\pi})$, because $g$ is positive and $\mathrm Zg(x,0)=\sum_{k\in\Z} g(x+k)>0$. Hence, Theorem~\ref{Zakconv} and Lemma~\ref{Hurwitz} implies, that it can not have any zero in this domain, since it is the uniform limit of holomorphic functions $\mathrm Zg_n(x,\cdot)$ without any zeros, which completes the proof.
\end{proof}

This Theorem especially implies, that $\mathrm Zg$ has no zero in $[0,1)\times(-\tfrac{1}{2},\tfrac{1}{2})$. Since it is well-known, that Zak transforms of continuous functions do have a zero, we are left to show, that they have exactly one zero in their fundamental domain, whenever $g$ is a TP function of infinite type without a Gaussian factor. For this, we need some preparations.

\begin{lemma}\label{features}
Let $\tilde{g}$ be a continuous TP function, defined by (\ref{TPfun}), including the infinite type. Then $\mathrm Z\tilde{g}(\cdot,\frac{1}{2})$ is real, $2$-periodic and not identically zero.
\end{lemma}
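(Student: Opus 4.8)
The plan is to split the statement into its three assertions, the first two being immediate from the definition and from quasi-periodicity, and the third reducing to the fact that $\widehat{\tilde g}$ does not vanish on the real axis.

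For the first two: since $e^{-2\pi i k/2}=(-1)^k$ and $\tilde g$ is real-valued, $\zak\tilde g(x,\tfrac12)=\sum_{k\in\Z}(-1)^k\tilde g(x+k)$ is real, the series converging absolutely by the exponential decay of integrable TP functions recalled just before Theorem~\ref{Zakconv}. Quasi-periodicity (Lemma~\ref{Zakprop}(b)) with $\alpha=1$ and $\omega=\tfrac12$ gives, for $n=2$, the factor $e^{2\pi i}=1$, hence $2$-periodicity; for $n=1$ it gives $e^{\pi i}=-1$, so $\zak\tilde g(\cdot,\tfrac12)$ is $1$-antiperiodic and its Fourier expansion over $[0,2]$ contains only the harmonics $e^{\pi i m x}$ with $m$ odd. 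I would use this last remark for the third assertion.

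For ``not identically zero'' the key point is that $\widehat{\tilde g}$ has no zero on $\R$. From (\ref{TPfun}) and (\ref{TypII}) one has $1/\widehat{\tilde g}(\omega)=\Psi(2\pi i\omega)$ in the infinite-type case, and $1/\widehat{\tilde g}(\omega)=\Psi_m(2\pi i\omega)$ --- a finite product of the nonzero factors $(1+2\pi i\omega/a_\nu)e^{-2\pi i\omega/a_\nu}$ --- in the finite-type case; since $\Psi$ (resp.\ $\Psi_m$) is entire with all of its zeros on the real line and equals $1$ at the origin, it does not vanish on the imaginary axis, so $\widehat{\tilde g}(\omega)$ is finite and nonzero for every $\omega\in\R$. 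Next I would compute the Fourier coefficients of the $2$-periodic function $\zak\tilde g(\cdot,\tfrac12)$: interchanging the (absolutely convergent) sum with the integral, substituting $u=x+k$, and using $\sum_{k\in\Z}\int_k^{k+2}f=2\int_\R f$ together with $(-1)^ke^{\pi i m k}=1$ for odd $m$, the coefficient of $e^{\pi i m x}$ turns out to be exactly $\widehat{\tilde g}(\tfrac m2)$ (and $0$ for even $m$, consistent with the antiperiodicity). Equivalently, the identity $\zak\tilde g(x,\tfrac12)=e^{\pi i x}\sum_{k\in\Z}\widehat{\tilde g}(k+\tfrac12)e^{2\pi i k x}$ follows at once from Lemma~\ref{Zakprop}(c), once one checks $\widehat{\tilde g}\in W(\R)$ --- which holds because its factors force super-polynomial decay (infinite type) or decay of order $m\ge 2$ (finite type). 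Since all these Fourier coefficients are nonzero, $\zak\tilde g(\cdot,\tfrac12)\not\equiv 0$; for finite type $n\ge 2$ this is in any case already contained in Theorem~\ref{zakzeroesg}.

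I do not expect a genuine obstacle. The argument is uniform in the type, and the only care needed is routine: the justification of the sum--integral interchange (or, on the alternative route, of $\widehat{\tilde g}\in W(\R)$ and the hypotheses of Lemma~\ref{Zakprop}(c)), the elementary identity $\sum_k\int_k^{k+2}f=2\int_\R f$, and the bookkeeping with half-integer frequencies. The single conceptual ingredient --- that $\widehat{\tilde g}$ never vanishes on $\R$ --- is immediate from the product representation of $\Psi$ (resp.\ $\Psi_m$) with only real zeros, which is already recorded in Section~2.
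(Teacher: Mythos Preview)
Your proposal is correct and follows essentially the same route as the paper. For the first two assertions you argue exactly as the paper does (definition plus quasi-periodicity), and for ``not identically zero'' both you and the paper reduce to the non-vanishing of $\widehat{\tilde g}$ on $\R$ via a Zak--Fourier identity; the only difference is that the paper uses the one-line Zak inversion formula
\[
\int_0^1 \zak\tilde g(x,\omega)e^{-2\pi i x\omega}\,dx=\widehat{\tilde g}(\omega)
\]
at $\omega=\tfrac12$ to extract a single nonzero value, whereas you compute the full Fourier expansion of the $2$-periodic function (or equivalently invoke Lemma~\ref{Zakprop}(c)), which yields the same conclusion with a bit more bookkeeping.
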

\begin{proof}
By the definition it is easy to see, that 
$$\mathrm Z\tilde{g}(x,\tfrac{1}{2}) = \sum_{k\in\Z}\tilde{g}(x+k)\,e^{-2\pi ik\tfrac{1}{2}} = \sum_{k\in\Z}\tilde{g}(x+k)(-1)^k$$
is a real valued function. Since it is quasi-periodic, 
$$\mathrm Z\tilde{g}(x+n,\tfrac{1}{2}) = e^{2\pi in\tfrac{1}{2}}\ \mathrm Z\tilde{g}(x,\tfrac{1}{2}) = (-1)^n\ \mathrm Z\tilde{g}(x,\tfrac{1}{2}),$$
obviously $\mathrm Z\tilde{g}(\cdot,\frac{1}{2})$ is a $2$-periodic function. Moreover, the Inversion Formula for Zak transforms,
\begin{align*}
\int_0^1 \mathrm Z\tilde{g}(x,\omega)e^{-2\pi ix\omega}\, dx &= \int_0^1 \left( \sum_{k\in\Z} \tilde{g}(x-k) e^{2\pi i\omega(k-x)} \right)\, dx\\
&= \sum_{k\in\Z} \int_0^1 \tilde{g}(x-k) e^{-2\pi i\omega(x-k)} \, dx\\ 
&= \hat{\tilde{g}}(\omega),
\end{align*}
implies that $\mathrm Z\tilde{g}(\cdot,\tfrac{1}{2})$ cannot be identically zero. 
\end{proof}

\begin{lemma}\label{monotonexp}
Let $\eta_1,\ldots,\eta_r\in\R$ be some pairwise distinct numbers, $q_1,\ldots,q_r$ some real nonzero polynomials of degree $\sigma_j-1$. Let $\gamma = \sigma_1 +\ldots +\sigma_r$ and consider a real valued function $h\in C^{\gamma-2}(\R)$, where $h^{(\gamma-2)}$ is absolutely continuous, $h(x+1) = -h(x)$ and
$$h(x) = \sum_{j=1}^r q_j(x)\, e^{\eta_jx},\quad x\in[0,1).$$
Then there exists $x_0\in\R$, such that $h$ is monotone on each interval $[x_0+k,x_0+k+1)$, $k\in\Z$.
\end{lemma}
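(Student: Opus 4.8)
The plan is to show that the derivative $h'$ has at most one strong sign change in every half-open interval of length $1$ and then to place $x_0$ at such a sign change. First note that $\gamma\ge 2$ (if $\gamma=1$ then $r=1$ and $q_1e^{\eta_1(x+1)}=-q_1e^{\eta_1x}$ forces $e^{\eta_1}=-1$, impossible for $\eta_1\in\R$). Since every $q_j\ne 0$ and the $\eta_j$ are distinct, $h|_{[0,1)}=\sum_j q_j(x)e^{\eta_j x}$ is a nonconstant real-analytic function, and by $h(x+1)=-h(x)$ the same holds on each $(k,k+1)$; hence $h'$ is real-analytic and not identically zero on every $(k,k+1)$, so its zeros are isolated. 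As $h'(x+1)=-h'(x)$, the set $\Sigma$ of strong sign changes of $h'$ is discrete and satisfies $\Sigma+1=\Sigma$, so $\Sigma=\{c_1,\dots,c_p\}+\Z$ for suitable $0\le c_1<\dots<c_p<1$, where $p:=\#(\Sigma\cap[0,1))$. It suffices to prove $p\le 1$: indeed, if $p=0$ then $h'$ has a fixed sign on each $(k,k+1)$, so $h$ is monotone on $[k,k+1)$ and any $x_0$ works; if $p=1$, set $x_0:=c_1$, so that on $(c_1,c_1+1)$ the neighbouring points of $\Sigma$ are $c_1$ and $c_1+1$, hence $h'$ does not change sign there, $h$ is monotone on $[c_1,c_1+1)$, and by $h(x)=(-1)^kh(x-k)$ it is monotone on every $[c_1+k,c_1+k+1)$.

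To prove $p\le 1$ I use a constant-coefficient differential operator of order one less than the operator $\mathcal D$ in the proof of Theorem~\ref{complZakzero}. After renumbering the weights we may assume $\eta_1\ne 0$ (at most one $\eta_j$ vanishes, and if $r=1$ then $\eta_1\ne 0$ as above). Put
$$
   \mathcal E:=\left(\frac{d}{dx}-\eta_1\,\id\right)^{\sigma_1-1}\left(\frac{d}{dx}-\eta_2\,\id\right)^{\sigma_2-1}\prod_{j=3}^{r}\left(\frac{d}{dx}-\eta_j\,\id\right)^{\sigma_j}\qquad(r\ge 2),
$$
and $\mathcal E:=\left(\frac{d}{dx}-\eta_1\,\id\right)^{\gamma-2}$ if $r=1$; in both cases $\mathcal E$ has order $\gamma-2$. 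Since $\deg q_j=\sigma_j-1$ for every $j$, applying $\mathcal E$ to $h|_{(0,1)}$ annihilates all but (at most) two exponential terms and reduces their polynomial factors to constants, so $(\mathcal E h)(x)=\Theta(x)$ on $(0,1)$ with $\Theta(x)=c_1e^{\eta_1x}+c_2e^{\eta_2x}$ (resp. $\Theta(x)=(c_0+c_1x)e^{\eta_1x}$), $c_1\ne 0$. Because $\mathcal E$ commutes with $\frac{d}{dx}$ and with integer translations, $h(x)=(-1)^kh(x-k)$ yields $(\mathcal E h')(x)=(-1)^k\Theta'(x-k)$ on $(k,k+1)$. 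The key point is that $\Theta'$ is a nonzero element of a two-dimensional ECT space (this is where $\eta_1\ne 0$ enters), hence has at most one zero in $\R$; a short case distinction (whether this zero lies in $(0,1)$, at an endpoint, or nowhere) then shows that $\mathcal E h'$ has at most $N$ zeros in $[0,N]$ for every $N\in\N$.

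Finally I combine this with Rolle's theorem. Since $h\in C^{\gamma-2}(\R)$ with $h^{(\gamma-2)}$ absolutely continuous, $h'$ has precisely the regularity needed for the order-$(\gamma-2)$ operator $\mathcal E$, and the $(\gamma-2)$-fold Rolle argument of Theorem~\ref{complZakzero} (with $m$ replaced by $\gamma-1$) gives, on $[0,N]$,
$$
   \#\{\text{zeros of }h'\}\ \le\ \#\{\text{zeros of }\mathcal E h'\}+(\gamma-2)\ \le\ N+\gamma-2 .
$$
On the other hand $\Sigma=\{c_1,\dots,c_p\}+\Z$ produces $pN$ distinct strong sign changes, hence $pN$ zeros, of $h'$ in $[0,N)$. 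Therefore $pN\le N+\gamma-2$, and letting $N\to\infty$ forces $p\le 1$, completing the proof.

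The main obstacle is the balancing in the last step: one must use an operator of order exactly $\gamma-2$, so that only $\gamma-2$ is lost in the Rolle estimate while $\Theta'$ still lands in a two-dimensional Tschebycheff space; and one must verify that the unavoidable sign change of $\mathcal E h'$ at the integer points does not combine with an interior sign change to give two sign changes per unit interval. It is exactly this "one zero per period" of $\mathcal E h'$ that produces the bound $N+\gamma-2$ (rather than something like $2N$), and hence the conclusion $p\le 1$.
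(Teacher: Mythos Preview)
Your argument is correct in substance and uses the same machinery as the paper---constant-coefficient differential operators built from the factors $\frac{d}{dx}-\eta_j\,\id$, Rolle's theorem, and a sign-change count on $[0,N]$ with $N\to\infty$---but the implementation is organised differently. The paper argues by contradiction: assuming non-monotonicity on $[x_0,x_0+1)$, it chooses a level $c$ so that $h_c:=h+c$ has at least four zeros in each period of length~$2$, applies the order-$(\gamma-1)$ operator $\mathcal D=D_{1}^{\sigma_1-1}\prod_{j\ge 2}D_{j}^{\sigma_j}$, and compares the Rolle bound $S^-(\mathcal D h_c)\ge 4N-\gamma+1$ with the explicit bound $S^-(\mathcal D h_c)\le 2N$ on $[0,2N]$ coming from $\mathcal D h_c(x)=(-1)^k b\,e^{\eta_1(x-k)}+\tilde c$. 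You instead work directly with $h'$, use an operator $\mathcal E$ of order $\gamma-2$ that leaves a two-term exponential sum $\Theta$, and obtain $pN\le N+\gamma-2$ from the fact that $\Theta'$ has at most one real zero. Both routes collapse to the same ``$(p-1)N\le\text{const}$'' contradiction; your version is a little more direct in that it bypasses the auxiliary level shift $c$, at the price of the short case distinction on where the zero of $\Theta'$ sits relative to $[0,1]$.

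One small gap: your claim that $\eta_1\ne 0$ when $r=1$ (``as above'') only covers the case $\gamma=1$; for $r=1$ with $\gamma\ge 2$ the weight $\eta_1=0$ is admissible (e.g.\ $h(x)=1-2x$ on $[0,1)$, extended anti-periodically, satisfies all hypotheses with $\gamma=2$). This does not damage the conclusion---then $\Theta'(x)=c_1$ is a nonzero constant and $\mathcal E h'$ has no interior zeros at all---but the sentence should be amended accordingly.
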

\begin{proof}
Analogously to the proof of Theorem~\ref{complZakzero}, consider the differential operator $\mathcal{D}=D_{1}^{\sigma_1-1}\prod_{j=2}^{r}D_{j}^{\sigma_j}$. Due to continuity and periodicity, $h(x+2) = -h(x+1) = h(x)$, the range of $h$ is bounded and so $h$ has at least one maximum at a point $x_0$ (and one minimum) in the interval $[0,2)$ of a period. If $h$ is not monotonously decreasing on $[x_0,x_0+1)$, the $2$-periodicity implies, that there is a number $c\in\R$, such that the function $h_c := h(\cdot)+c$ has at least four zeros in $[0,2)$. If we apply $\mathcal D$ to this function we get
$$\mathcal D h_c(x) = be^{\eta_1 x} + \underbrace{\prod_{j=2}^r(-\eta_j)^{\sigma_j}\, (-\eta_1)^{\sigma_1-1}\, c}_{=:\tilde{c}}, \quad b\in\R,\ x\in(0,1),$$
and since $h(x+1) = -h(x)$, we have
$$\mathcal D h_c(x) = (-1)^k\, be^{\eta_1 x} +\tilde{c},\quad x\in (k,k+1), k\in\Z.$$
For large $N\in\N$ we get a contradiction, by counting sign changes again:
$$4N-\gamma+1\leq \mathcal S^-(\mathcal Dh_c)\leq 2N\qquad\hbox{on}\quad [0,2N].$$
This shows, that $h$ is monotone on every interval $[x_0+k,x_0+k+1)$, $k\in\Z$, where $x_0+k$ are the extrema of $h$.
\end{proof}

\begin{corollary}\label{Bmonotone}
Let $\Lambda=(\eta_1,\ldots,\eta_1,\ldots,\eta_r,\ldots,\eta_r)\in\R^m$, $\eta_j\neq\eta_i$, and $B_{\Lambda}$ be the associated EB-spline as defined in (\ref{expBspline}). Then there exist numbers $x_0,y_0\in\R$, such that 
$\mathrm ZB_{\Lambda}(\cdot,\tfrac{1}{2})$
is monotone on $[x_0+k,x_0+k+1)$ and the derivative $D_n\mathrm ZB_{\Lambda}(\cdot,\tfrac{1}{2})$, $1\leq n\leq r$, is monotone on $[y_0+k,y_0+k+1)$, for all $k\in\Z$.
\end{corollary}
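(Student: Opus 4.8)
The statement follows quite directly from Lemma~\ref{monotonexp}. The plan is to apply that lemma twice: once to $h:=\mathrm ZB_\Lambda(\cdot,\tfrac{1}{2})$, which produces $x_0$, and once to each of the functions $D_nh$, $1\le n\le r$, which produces (one for each $n$) the parameter $y_0$; here $D_n$ denotes the operator $f\mapsto e^{\eta_nx}\tfrac{d}{dx}(e^{-\eta_nx}f)=f'-\eta_nf$ from the proof of Theorem~\ref{complZakzero}. Thus the entire task reduces to checking, for $h$ and for each $D_nh$, that the structural hypotheses of Lemma~\ref{monotonexp} are satisfied.

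First I would handle $h$. Evaluating the piecewise representation \eqref{eq:EBspline} at $s=\tfrac{1}{2}$, where $e^{-2\pi iks}=(-1)^k$, yields on $[0,1)$
$$
  h(x)=\sum_{j=1}^{r} q_j(x)\,e^{\eta_jx},\qquad q_j(x)=\sum_{k=1}^{n}(-1)^k p_j^{(k)}(x),
$$
so each $q_j$ is a \emph{real} polynomial with $\deg q_j\le\mu_j-1$. Discarding the indices with $q_j\equiv0$ (the remaining family is nonempty because $h\not\equiv0$: by the Zak-transform inversion formula used in the proof of Lemma~\ref{features}, $\int_0^1 h(x)e^{-\pi ix}\,dx=\hat{B_\Lambda}(\tfrac{1}{2})\ne0$) leaves only nonzero polynomials $q_j$ attached to pairwise distinct $\eta_j$. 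Moreover $h=\sum_{k\in\Z}B_\Lambda(\cdot+k)(-1)^k$ is real-valued, the quasi-periodicity in Lemma~\ref{Zakprop}(b) gives $h(x+1)=e^{\pi i}h(x)=-h(x)$, and $h\in C^{m-2}(\R)$ with $h^{(m-2)}$ absolutely continuous because $B_\Lambda$ has these properties; since the degree sum $\gamma=\sum_j\sigma_j$ satisfies $\gamma\le\sum_j\mu_j=m$, this in particular gives $h\in C^{\gamma-2}(\R)$ with $h^{(\gamma-2)}$ absolutely continuous. All hypotheses of Lemma~\ref{monotonexp} hold, so it produces $x_0$ with $\mathrm ZB_\Lambda(\cdot,\tfrac{1}{2})$ monotone on every interval $[x_0+k,x_0+k+1)$.

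For the derivatives, fix $n\in\{1,\ldots,r\}$ and set $f_n:=D_nh$. The identity already computed in the proof of Theorem~\ref{complZakzero} gives, on $(0,1)$,
$$
  f_n(x)=q_n'(x)\,e^{\eta_nx}+\sum_{j\ne n}\bigl((\eta_j-\eta_n)q_j(x)+q_j'(x)\bigr)e^{\eta_jx}.
$$
For $j\ne n$ the coefficient of $e^{\eta_jx}$ has leading term $(\eta_j-\eta_n)$ times that of $q_j$, hence is a nonzero real polynomial of degree $\sigma_j-1$; the coefficient $q_n'$ of $e^{\eta_nx}$ is a real polynomial of degree $\sigma_n-2$, nonzero when $\sigma_n\ge2$ and absent when $\sigma_n=1$. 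So $f_n$ again has the shape required by Lemma~\ref{monotonexp}, now with degree sum $\gamma-1$. Antiperiodicity is inherited, $f_n(x+1)=h'(x+1)-\eta_nh(x+1)=-h'(x)+\eta_nh(x)=-f_n(x)$, and from $h\in C^{\gamma-2}(\R)$ with $h^{(\gamma-2)}$ absolutely continuous one reads off $f_n=h'-\eta_nh\in C^{\gamma-3}(\R)$ with $f_n^{(\gamma-3)}$ absolutely continuous, matching the reduced degree sum. Hence Lemma~\ref{monotonexp} applies to $f_n$ and delivers a $y_0=y_0(n)$ on whose unit translates $D_n\mathrm ZB_\Lambda(\cdot,\tfrac{1}{2})$ is monotone; the few degenerate low-order cases ($\gamma\le2$, where on $[0,1)$ the functions are single exponentials) are monotone by inspection.

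The argument is mostly bookkeeping; the one place that needs genuine care is verifying that $D_n$ preserves \emph{all} the hypotheses of Lemma~\ref{monotonexp} simultaneously: the assumption $\eta_j\ne\eta_n$ is precisely what keeps the leading coefficients $(\eta_j-\eta_n)q_j+q_j'$ from being annihilated, so no $e^{\eta_jx}$-term with $j\ne n$ is lost, and passing from $h$ to $h'$ lowers the relevant smoothness order by exactly one, so that it still matches the degree sum of $f_n$. Everything else is inherited directly from the representation \eqref{eq:EBspline} and the quasi-periodicity in Lemma~\ref{Zakprop}.
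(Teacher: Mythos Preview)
Your proof is correct and follows exactly the same approach as the paper's own proof: write $\mathrm{Z}B_\Lambda(\cdot,\tfrac12)$ and $D_n\mathrm{Z}B_\Lambda(\cdot,\tfrac12)$ as exponential sums with real polynomial coefficients via \eqref{eq:EBspline}, then invoke Lemma~\ref{features} (for the real-valued, antiperiodic, not-identically-zero conditions) and Lemma~\ref{monotonexp}. The paper's proof is terser and leaves the verification of the hypotheses of Lemma~\ref{monotonexp} to the reader, whereas you spell out the degree bookkeeping, the non-vanishing of $\hat{B_\Lambda}(\tfrac12)$, and the smoothness drop under $D_n$; but there is no substantive difference in strategy.
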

\begin{proof}
With the notations as in equation (\ref{eq:EBspline}) these functions can be written as a sum of exponentials with polynomial coefficients,
\begin{eqnarray*}
\mathrm ZB_{\Lambda}(x,\tfrac{1}{2})    & = & \sum_{j=1}^r q_j(x)\,e^{\eta_jx},\\
D_n\mathrm ZB_{\Lambda}(x,\tfrac{1}{2}) & = & \left(D_n\left( \sum_{j=1}^r q_j\,e^{\eta_j\cdot} \right)\right)\!(x) = e^{\eta_nx}\frac{d}{dx}\left( e^{-\eta_nx} \sum_{j=1}^r q_j(x)\,e^{\eta_jx} \right)\\
                                        & = & q_n'(x)e^{\eta_n x}+\sum_{j\neq n} ((\eta_j-\eta_n)q_k(x)+q_k'(x)))e^{\eta_k x}.
\end{eqnarray*}
Therefore the Corollary follows directly from Lemma~\ref{features} and Lemma~\ref{monotonexp}.
\end{proof}

\begin{corollary}\label{monotonicity}
Let $g$ be a TP function of infinite type as defined in (\ref{TPfun}). Then there exists $x_0\in[0,1)$, such that $\mathrm Zg(\cdot,\tfrac{1}{2})$ is monotone on every interval  $[x_0+k,x_0+k+1)$, $k\in\Z$, of length one.
\end{corollary}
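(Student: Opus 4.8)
The plan is to derive the statement from the finite-type case, Corollary~\ref{Bmonotone}, by a compactness argument combined with the uniform convergence provided by Theorem~\ref{Zakconv}. Write $h:=\mathrm Zg(\cdot,\tfrac12)$ and $h_n:=\mathrm Zg_n(\cdot,\tfrac12)$, where $g_n$ is the TP function of type $n$ as in (\ref{TPfun}).

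First I would reduce the finite-type transform to an EB-spline. For $n\ge 2$ put $\Lambda_n=(-a_1,\ldots,-a_n)$. Since $1-e^{-(a_\nu+2\pi i/2)}=1+e^{-a_\nu}>0$, Theorem~\ref{b-splinezak} evaluated at $\omega=\tfrac12$ gives $h_n=c_n\,\mathrm ZB_{\Lambda_n}(\cdot,\tfrac12)$ with the nonzero real constant $c_n=\prod_{\nu=1}^n a_\nu/(1+e^{-a_\nu})$. Hence Corollary~\ref{Bmonotone} (after reducing its base point modulo $1$) provides $\xi_n\in[0,1)$ such that $h_n$ is monotone on every interval $[\xi_n+k,\xi_n+k+1)$, $k\in\Z$; moreover, by Lemma~\ref{features}, $h_n$ is real, continuous (as $g_n$ is continuous for $n\ge 2$), $2$-periodic and not identically zero, so $\xi_n$ is, modulo $1$, one of the two turning points of $h_n$ per period.

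Next I would pass to the limit. By Lemma~\ref{Zakprop}a) and Lemma~\ref{features}, $h$ is continuous, real, $2$-periodic and not identically zero, and by Theorem~\ref{Zakconv} with $s=\tfrac12$ (so $\tau=0$) we have $h_n\to h$ uniformly on $\R$. Choose a subsequence with $\xi_{n_j}\to x_0\in[0,1]$ along which, in addition, the monotone function $h_{n_j}|_{[\xi_{n_j},\xi_{n_j}+1]}$ has a fixed direction, say nondecreasing (the nonincreasing case is analogous and also yields a monotone limit). For any $x_0<s\le t<x_0+1$ one has $\xi_{n_j}<s\le t<\xi_{n_j}+1$ once $j$ is large, hence $h_{n_j}(s)\le h_{n_j}(t)$, and letting $j\to\infty$ gives $h(s)\le h(t)$. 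Thus $h$ is nondecreasing on $(x_0,x_0+1)$, and on $[x_0,x_0+1]$ by continuity. Finally, the identity $h(x+1)=-h(x)$ (quasi-periodicity at $\omega=\tfrac12$, as in Lemma~\ref{features}) propagates this to all intervals $[x_0+k,x_0+k+1)$, $k\in\Z$, with alternating direction; replacing $x_0$ by $0$ in case $x_0=1$, we obtain the claimed $x_0\in[0,1)$.

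The one genuinely delicate point is the compactness step: the positions $\xi_n$ of the turning points depend on $n$ and a priori need not converge, so one must extract a convergent subsequence and argue that the limiting inequalities survive. This works precisely because monotonicity is expressed through the \emph{closed} inequalities $h(s)\le h(t)$, which are stable under pointwise limits, and because $|\mathrm{Re}\,s|=0<\tfrac12$ — together with Lemma~\ref{features} — keeps $h_n$ and $h$ from degenerating to the zero function, so that the conclusion "monotone" is meaningful.
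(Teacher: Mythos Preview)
Your argument is correct and proceeds by a genuinely different route than the paper. The paper argues by contradiction: it first fixes $x_0$ as a point where $f:=\mathrm Zg(\cdot,\tfrac12)$ attains its maximum, assumes $f$ fails to be monotone on $[x_0,x_0+1)$, and then exhibits four concrete witness points $x_0<z_1<z_2<z_3$ with $f(x_0)>f(z_1)$, $f(z_2)>f(z_1)$, $f(z_2)>f(z_3)$; uniform convergence (Theorem~\ref{Zakconv}) transports these strict inequalities to $f_n=\mathrm Zg_n(\cdot,\tfrac12)$ for large $n$, forcing $f_n$ to be non-monotone on every unit interval and contradicting Corollary~\ref{Bmonotone} via Theorem~\ref{b-splinezak}. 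Your approach is direct: you take the monotonicity base points $\xi_n$ supplied by Corollary~\ref{Bmonotone}, extract a convergent subsequence in $[0,1]$ with fixed direction, and pass to the limit in the weak inequalities $h_{n_j}(s)\le h_{n_j}(t)$. The paper's method has the minor advantage of identifying $x_0$ intrinsically as the maximum of $\mathrm Zg(\cdot,\tfrac12)$, which is used again in the proof of the next theorem; your compactness argument is cleaner and avoids the somewhat fiddly construction of the points $z_1,z_2,z_3$, at the cost of $x_0$ being obtained only as an abstract subsequential limit.
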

\begin{proof}
Because of continuity and periodicity the range of $f:=\mathrm Zg(\cdot,\tfrac{1}{2})$ is bounded and there is at least one maximum (and one minimum) in the interval $[0,2)$. Without loss of generality, let there be a maximum at $x_0\in[0,1)$ (and at $x_0+1$ a minimum). If $f$ is not monotonously decreasing on $[x_0,x_0+1)$, there exist $\delta>0$ and points $z_1<z_2<z_3\in[x_0+\delta,x_0+1-\delta]$, such that $f$ is monotone on $[x_0,x_0+\delta)$ and 
$$f(z_1)<f(z_2)\ ,\quad f(z_3)<f(z_2).$$
With $\varepsilon := \tfrac{1}{4}\cdot\min\{f(x_0)-f(z_1),f(z_2)-f(z_1),f(z_2)-f(z_3)\}$ the uniform convergence of the Zak transforms of TP functions implies, that there is a number $n_0\in\N$, such that for every $n\geq n_0$ the function $f_n := \mathrm Zg_n(\cdot,\tfrac{1}{2})$ fulfills
$$f_n(x_0)>f_n(z_1),\ f_n(z_2)>f_n(z_1),\ f_n(z_2)>f_n(z_3).$$
Therefore there is also no interval of length one, where $f_n$ is monotone. Because of Theorem~\ref{b-splinezak}, the same property holds for the Zak transform of the associated EB-spline $\mathrm ZB_{\Lambda}(\cdot,\tfrac{1}{2}) = C\cdot \mathrm Zg_n(\cdot,\tfrac{1}{2})$, $C\in\R$, which is a contradiction to Lemma~\ref{Bmonotone}.
\end{proof}

Now we can prove the last part of our main result.

\begin{theorem}
Let $g$ be a TP function of infinite type as defined in (\ref{TPfun}). Then there exists $\tilde{x}\in[0,1)$, such that $\mathrm Zg(\tilde{x},\tfrac{1}{2}) = 0$,
and $\mathrm Zg(x,\tfrac{1}{2}) \neq 0$ 
for all $x \in [0,1)\setminus\{\tilde{x}\}$.
\end{theorem}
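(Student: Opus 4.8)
The plan is to combine the three facts already assembled: that $\mathrm Zg(\cdot,\tfrac12)$ is real, continuous, $2$-periodic and not identically zero (Lemma~\ref{features}); that it is piecewise monotone in the strong sense of Corollary~\ref{monotonicity}, namely monotone on each interval $[x_0+k,x_0+k+1)$ for a fixed $x_0\in[0,1)$; and that the continuity forces a zero in every lattice cell while Theorem~\ref{complZakzero} already rules out zeros off the line $\omega=\tfrac12$. So the whole statement reduces to counting the zeros of the one real variable function $f:=\mathrm Zg(\cdot,\tfrac12)$ on a period, and showing there is exactly one in $[0,1)$.

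First I would record that $f$ has \emph{at least} one zero in $[0,1)$: since the Zak transform of a continuous function in the Wiener space is continuous and quasi-periodic, the standard topological argument (the same one cited in the introduction from \cite{Janssen:1982}) shows it must vanish somewhere in each lattice cell; alternatively, $f(x+1)=-f(x)$ together with continuity of $f$ and the intermediate value theorem gives a zero in $[0,1)$ directly. Next I would bound the number of zeros from above using piecewise monotonicity. By Corollary~\ref{monotonicity}, fix $x_0\in[0,1)$ so that $f$ is monotone on $[x_0,x_0+1)$ and on $[x_0-1,x_0)$; by the relation $f(x+1)=-f(x)$ these two monotonicities are ``opposite'', so $x_0$ and $x_0+1$ are, respectively, a maximum and a minimum (or vice versa) of $f$ over the period $[x_0,x_0+2)$. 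A function that is monotone on $[x_0,x_0+1)$ can have at most one zero there, and likewise at most one on $[x_0+1,x_0+2)$; hence $f$ has at most two zeros in the period $[x_0,x_0+2)$, i.e.\ at most one in any interval of length one — in particular at most one in $[0,1)$. Combined with the existence of at least one, this yields exactly one $\tilde x$ in $[0,1)$ with $f(\tilde x)=0$, and $f(x)\neq0$ for $x\in[0,1)\setminus\{\tilde x\}$, which is the claim.

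The one place that needs care, and which I expect to be the main obstacle, is making the ``opposite monotonicity'' argument airtight when $f$ is merely monotone (not strictly): a constant piece would a priori allow infinitely many zeros. Here one uses that $f$ is real-analytic on each open interval $(x_0+k,x_0+k+1)$ — it is a finite exponential sum with polynomial coefficients (or, in the infinite-type case, the uniform limit handled via Theorem~\ref{b-splinezak} for the approximants and the local linear independence of EB-spline shifts used in Theorem~\ref{complZakzero}) — so $f$ cannot be constant on any subinterval, hence a ``monotone'' branch is in fact strictly monotone there and contributes at most one zero. Actually the cleanest route is: apply Corollary~\ref{monotonicity} to see $f$ has at most one local extremum in $[x_0,x_0+1)$ besides the endpoints, deduce $f$ has at most one sign change on $[x_0,x_0+1)$ and at most one on $[x_0+1,x_0+2)$, and then transport this count to $[0,1)$ by periodicity. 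The remaining assertions — existence of the zero and its location at $\omega=\tfrac12$ — are then immediate from Lemma~\ref{features} and Theorem~\ref{complZakzero}, so no further work is required.
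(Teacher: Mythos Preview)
Your outline is sound up to the point you yourself flag: monotonicity of $f=\mathrm Zg(\cdot,\tfrac12)$ on $[x_0,x_0+1)$ gives at most one \emph{sign change}, but does not by itself rule out a whole interval $[z_1,z_2]\subset[x_0,x_0+1)$ on which $f\equiv 0$. Your attempted fix does not close this gap. For finite type, yes, $f$ is a finite exponential sum on $(0,1)$ and cannot vanish on an interval; but for the infinite type --- which is exactly the case at hand --- you only know $f$ as a uniform limit of such sums, and uniform limits of real-analytic functions need not be real-analytic, so the claim ``$f$ cannot be constant on any subinterval'' is unsupported. Invoking local linear independence of EB-spline shifts likewise only speaks to the approximants $g_n$, not to $g$.

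The paper handles precisely this residual case with a different device. Assuming a zero interval $[z_1,z_2]$, it applies the first-order operator $D_1=\tfrac{d}{dx}+a_1\,\mathrm{id}$ to $f$ and checks its signs at four points $x_0$, $z_2$, $z_2+\theta$, $x_0+1$ (using the mean-value theorem near the right endpoint $z_2$) to conclude that $D_1f$ fails to be monotone on any interval of length one. Since the convergence $g_n\to g$ in Theorem~\ref{convrate} is also valid for derivatives, this non-monotonicity transfers to $D_1\mathrm Zg_n(\cdot,\tfrac12)$ for large $n$, hence (via Theorem~\ref{b-splinezak}) to $D_1\mathrm ZB_\Lambda(\cdot,\tfrac12)$, contradicting Corollary~\ref{Bmonotone}. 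That is the missing idea: pass to a derivative where the putative flat piece forces extra oscillation, and then push the contradiction back through the approximation.
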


\begin{proof}
It was first pointed out in \cite{Zak:1975} that if the Zak transform $\mathrm Z g$ is continuous, 
then it has a zero in $[0,1)\times[0,1)$. Since $g$ is real, we know from the results in \cite{Janssen:1988},
that there exists $\tilde{x}\in[0,1)$, such that $\mathrm Zg(\tilde{x},\tfrac{1}{2}) = 0$. Now, we assume that there exist at least two zeros in an interval of monotonicity $[x_0,x_0+1)$, where $\mathrm Zg(\cdot,\tfrac{1}{2})$ has a maximum at $x_0$ again. If there are some isolated zeros or more than one zero intervals, $\mathrm Zg(\cdot,\tfrac{1}{2})$ can not be monotone, which is a contradiction to Corollary~\ref{monotonicity}. Therefore we suppose, that there is one single zero interval $[z_1,z_2]\subset[x_0,x_0+1)$. First we assume that $a_1<0$, where
$$\hat{g}(\omega) = \prod_{\nu=1}^{\infty} \frac{e^{2\pi i\frac{\omega}{a_{\nu}}}}{1+2\pi i\frac{\omega}{a_{\nu}}},$$
and we let $f:=\mathrm Zg(\cdot,\frac{1}{2})$. Since $f$ is monotonously decreasing on $[x_0,x_0+1)$, the mean-value theorem implies, that for $0<\varepsilon<-a_1^{-1}$ there is $0<\theta<\varepsilon$ with
$$\abs{f'(z_2+\theta)} = \frac{\abs{f(z_2+\varepsilon)}}{\varepsilon} > \frac{\abs{f(z_2+\theta)}}{\varepsilon} > -a_1\abs{f(z_2+\theta)}.$$
On the other hand, $f'(x_0+1) = 0$, $f(x_0+1)< 0$ and therefore the derivative
\begin{align}\label{eq:monot}
D_1f = \left(\frac{d}{dx}+a_1\,\mathrm{id}\right)f = f' +a_1\, f
\end{align}
fulfills $D_1f(x_0)\!<\!0$, $D_1f(z_2)\!=\!0$, $D_1f(z_2+\theta)\!<\!0$ and $D_1f(x_0+1)\!>\!0$, which implies, that $D_1f$ is not monotone on any interval of length one. Analogously to the proof of Corollary~\ref{monotonicity}, the convergence property of Theorem~\ref{Zakconv} implies, that there is a number $n_0\in\N$, such that for every $n\geq n_0$ the derivative $D_1\mathrm Zg_n(\cdot,\frac{1}{2})$ also is not monotone on any interval of length one. Hence, because of Theorem~\ref{b-splinezak}, the same holds for the Zak transform of the associated EB-spline $D_1\mathrm ZB_{\Lambda}(\cdot,\tfrac{1}{2})$, which is a contradiction to Lemma~\ref{Bmonotone} again. Considering the interval $[x_0,z_1]$, instead of $[z_2,x_0+1]$, the case $a_1>0$ follows analogously.
\end{proof}

In \cite{BanGroeStoe:2013} the information about the location of the zero of the Zak transform of TP functions of finite type is used to construct discrete Gabor frames in several cases. Now, with Theorem~\ref{complZakzero}, we can extend the result to the class of all TP functions without a Gaussian factor in its Fourier transform. With the periodization operator $\mathcal P_K:L^1(\R)\rightarrow L^1(\mathbb T_K)$ and the sampling operator $S:L^2(\R)\rightarrow \ell^2(\Z)$,
\begin{align*}
\mathcal P_Kg = \sum_{k\in\Z}g(\cdot -kK),\qquad Sg = (g(k))_{k\in\Z},
\end{align*}
the following holds.

\begin{corollary}[{cf. \cite[Theorem 8]{BanGroeStoe:2013}}]
Let $\tilde{g}$ be a continuous TP function, defined by (\ref{TPfun}), including the infinite type. Assume $\alpha = M\in\N$ and let $\beta = 1/M$ and $k\in\N$ such that $K/M\in\N$.
\begin{itemize}
\item If $K/M$ is odd, then $\mathcal G(\mathcal P_K\tilde{g},\alpha,\beta)$ is a Gabor frame for $L^2(\mathbb T_k)$.
\item If $K/M$ is odd, then $\mathcal G(\mathcal P_KS\tilde{g},\alpha,\beta)$ is a Gabor frame for $\mathbb C^K$.
\end{itemize}
In addition, assume that $\tilde{g}$ is even, which means, that $\{a_{\nu}\mid a_{\nu}>0\}=\{-a_{\nu}\mid a_{\nu}<0\}$.
\begin{itemize}
\item If $M$ is odd, then $\mathcal G(S\tilde{g},\alpha,\beta)$ is a Gabor frame for $L^2(\mathbb T_k)$.
\item If $M$ is odd, then $\mathcal G(\mathcal P_KS\tilde{g},\alpha,\beta)$ is a Gabor frame for $\mathbb C^K$.
\end{itemize}
\end{corollary}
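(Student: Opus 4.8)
The plan is to reduce each assertion to the non-vanishing of the continuous Zak transform $\mathrm Z\tilde g$ on an explicit finite set of grid points, and then to feed in the zero-location results obtained above. The bridge is the frame criterion of \cite[Theorem 8]{BanGroeStoe:2013}: for $\alpha=M$, $\beta=1/M$ (so $\alpha\beta=1$) the periodized/sampled Gabor system is a frame precisely when the associated finite Zak transform of the window has no zero, the Zibulski--Zeevi matrix being a scalar at the critical density. I would first record, by a Poisson-summation computation, the identity $\mathrm Z^{K}(\mathcal P_K S\tilde g)(p,q)=\mathrm Z_M\tilde g(p,\tfrac{q}{K})$ for $p\in\{0,\dots,M-1\}$ and $q\in\{0,\dots,K/M-1\}$, together with its partially discretized analogues for $\mathcal P_K\tilde g$ (time continuous) and for $S\tilde g$ (frequency continuous). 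Using the scaling identity of Lemma~\ref{Zakprop}\,d), $\mathrm Z_M\tilde g(x,\omega)=\mathrm Z\tilde g_M(\tfrac{x}{M},M\omega)$ with $\tilde g_M=\tilde g(M\cdot)$, this transports everything to the unit Zak transform, for which the preceding Theorem together with Theorem~\ref{complZakzero} locates all zeros: $\mathrm Z\tilde g_M(x,\omega)=0$ only if $\omega\equiv\tfrac12\pmod 1$. Since $\tilde g$ (having no Gaussian factor) is continuous and exponentially decaying, hence in $W(\R)$, Lemma~\ref{Zakprop}\,a) gives boundedness of the Zak transform and thus the upper frame bound automatically, while the finite-dimensional systems on $\mathbb C^K$ need no separate upper bound.

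For the first two bullets I would argue as follows. In $\mathrm Z_M\tilde g(p,\tfrac{q}{K})=\mathrm Z\tilde g_M(\tfrac{p}{M},\tfrac{qM}{K})$ the frequency argument is $\tfrac{qM}{K}=\tfrac qL$ with $L:=K/M$. A zero can occur only when $\tfrac qL\equiv\tfrac12\pmod 1$; for $0\le q\le L-1$ this forces $q=L/2$ and hence $L$ even. Therefore, if $K/M=L$ is odd the frequency $\tfrac12$ is never sampled and $\mathrm Z_M\tilde g$ stays nonzero on the whole grid, which yields the positive lower frame bound for $\mathcal G(\mathcal P_K S\tilde g,\alpha,\beta)$ on $\mathbb C^K$ and, letting the time variable run continuously, for $\mathcal G(\mathcal P_K\tilde g,\alpha,\beta)$ on $L^2(\mathbb T_K)$.

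For the even case I would first upgrade the zero-location in the time variable. When $\tilde g$ is even, a substitution in the defining series gives $\mathrm Z\tilde g(-x,-\omega)=\mathrm Z\tilde g(x,\omega)$, so $f:=\mathrm Z\tilde g(\cdot,\tfrac12)$ is even; combined with the quasi-periodicity $f(x+1)=-f(x)$ from Lemma~\ref{features} this yields $f(1-x)=-f(x)$ and hence $f(\tfrac12)=0$. By the uniqueness of the zero this forces $\tilde x=\tfrac12$, and the same holds for $\tilde g_M$, so that $\mathrm Z\tilde g_M(x,\omega)=0$ only at $(x,\omega)\equiv(\tfrac12,\tfrac12)\pmod{(1,1)}$. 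Sampling the window discretizes the time argument onto $\tfrac{p}{M}$, $p\in\{0,\dots,M-1\}$, so a zero now requires $\tfrac pM\equiv\tfrac12\pmod1$, i.e. $p=M/2$ and $M$ even. Hence $M$ odd keeps $\tfrac12$ out of the time grid and the system stays a frame, both for $\mathcal G(S\tilde g,\alpha,\beta)$ (frequency continuous) and for $\mathcal G(\mathcal P_K S\tilde g,\alpha,\beta)$ on $\mathbb C^K$; in the latter a zero needs $M$ and $L$ simultaneously even, so either parity hypothesis suffices.

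The main obstacle I expect is pinning down the exact sampling grid supplied by \cite[Theorem 8]{BanGroeStoe:2013} and verifying the finite Zak identity $\mathrm Z^{K}(\mathcal P_K S\tilde g)(p,q)=\mathrm Z_M\tilde g(p,\tfrac qK)$ together with its degenerate time- or frequency-continuous versions, so that the parity bookkeeping $q=L/2$ and $p=M/2$ really lines up with the single zero locus $\{\omega\equiv\tfrac12\}$, respectively $\{(x,\omega)\equiv(\tfrac12,\tfrac12)\}$ in the even case. The remaining steps are the routine scaling reduction via Lemma~\ref{Zakprop}\,d) and the automatic upper bound from membership in $W(\R)$.
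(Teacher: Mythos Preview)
The paper's own proof is a one-line deferral: ``For the proof see \cite[Theorem 8]{BanGroeStoe:2013}.'' No argument is given beyond that citation; the point of the corollary is merely that the zero-location input required by \cite[Theorem 8]{BanGroeStoe:2013}, previously available only for TP functions of finite type, is now supplied for the infinite type by Theorem~\ref{complZakzero} and the subsequent uniqueness theorem.

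Your proposal goes well beyond this: you reconstruct what the argument in \cite{BanGroeStoe:2013} presumably is (reduce the frame condition at $\alpha\beta=1$ to non-vanishing of the Zak transform on a finite grid, rescale via Lemma~\ref{Zakprop}\,d), and check that the grid avoids the unique zero locus $\omega\equiv\tfrac12$, respectively $(x,\omega)\equiv(\tfrac12,\tfrac12)$ in the even case). The parity bookkeeping you carry out is correct, and your derivation of $\tilde x=\tfrac12$ from evenness plus $f(x+1)=-f(x)$ is the right mechanism. Since the paper gives no details there is nothing to compare step by step; your sketch is simply a fleshed-out version of the cited result, and your own caveat about verifying the exact finite-Zak identity against \cite{BanGroeStoe:2013} is the only genuine gap to close.
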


\begin{proof}
For the proof see \cite[Theorem 8]{BanGroeStoe:2013}.
\end{proof}

%%%%%%%%%%%%%%%%%%%%%%%%%%%%%%%%%%%%%%%%%%%%%%%%%%%%%%%%%%%%%%%%%%%%%%%%%%%%%%%%

\section*{Acknowledgement}

\thispagestyle{empty}
We are very grateful to J. St\"ockler for some discussions about the subjects in this article. %, and an anonymous referee for valuable suggestions.
%\clearpage

%%%%%%%%%%%%%%%%%%%%%%%%%%%%%%%%%%%%%%%%%%%%%%%%%%%%%%%%%%%%%%%%%%%%%%%%%%%%%%%%

%\section*{References}
%\bibliographystyle{model1b-num-names}
%\bibliography{bib}

%%%%%%%%%%%%%%%%%%%%%%%%%%%%%%%%%%%%%%%%%%%%%%%%%%%%%%%%%%%%%%%%%%%%%%%%%%%%%%%%

\end{document}